\newtheorem{theorem}{Theorem}[section]
\newtheorem{corollary}[theorem]{Corollary}
\newtheorem{lemma}[theorem]{Lemma}
\newtheorem{proposition}[theorem]{Proposition}
\newtheorem{definition}[theorem]{Definition}
\theoremstyle{remark}
\newtheorem*{remark}{Remark}
\newtheorem*{example}{Example}
\begin{document}

\title[Two theorems on the intersections of horospheres]{Two theorems on the intersections of horospheres in asymptotically harmonic spaces}
\author{Sinhwi Kim}
\address{Department of Mathematics, Sungkyunkwan University \\
Suwon, 16419, Korea}
\email{kimsinhwi@skku.edu, parkj@skku.edu}

\author{JeongHyeong Park}


\subjclass[2010]{53C25, 53C30}
\keywords{asymptotically harmonic manifold, Busemann function, horosphere, visibility manifold}

\maketitle

\begin{abstract}
We use Busemann functions to construct volume preserving mappings in an asymptotically harmonic manifold.
If the asymptotically harmonic manifold satisfies the visibility condition, we construct mappings which preserve distances in some directions.
We also prove that some integrals on the intersection of horospheres are independent of the differences between the values of the corresponding Busemann functions and we establish an upper bound of the volume of the intersection of two horospheres which is independent
of the difference between values of corresponding Busemann functions.
\end{abstract}

\section{Introduction}



Let $(M,g)$ be a Hadamard manifold, that is, simply connected, complete Riemannian manifold of non-positive sectional curvatures. Let $d(p,q)$ be the distance between $p,q\in M$. For each unit tangent vector $v$ to $M$,
the Busemann function $b_v: M\rightarrow\mathbb{R}$ on $(M,g)$ is defined by
\begin{equation*}
b_v(x)=\lim_{t\rightarrow\infty}(d(x,\gamma_v(t))-t),
\end{equation*}
where $\gamma_v: [0,\infty)\rightarrow M$ is a geodesic ray such that $\gamma_v'(0)=v$.
Busemann functions on a Hadamard manifold are convex \cite{Esc} and $C^2$
\cite{HI}.
The level hypersurfaces of Busemann functions are called horospheres.
A Riemannian manifold is called harmonic, if, about any point, the geodesic spheres
of sufficiently small radii are of constant mean curvature.
A Hadamard manifold $(M,g)$ is called asymptotically harmonic if the mean curvatures of its horospheres are constant \cite{Heb}. Asymptotically harmonic manifolds belong to a special class of Riemannian manifolds.
All harmonic manifolds are asymptotically harmonic, and all known asymptotically harmonic manifolds are harmonic and homogeneous \cite{Heb}.
It is also remarkable that the following question is remained open: Is a harmonic manifold homogeneous?

To consider the homogeneousness of a harmonic manifold and, more generally, of an asymptotically harmonic manifold,
we construct a volume preserving mapping which maps one point to another point.
For this construction, we use the mean curvature of horospheres and the variation of the volume density for the flow in the orthogonal direction to horospheres.






Let $C_c(M)$ be the set of continuous functions with compact support and $T^1M$ be a unit tangent bundle.
We obtain the following theorem.
\begin{theorem}\label{thm_volpre}
Let $(M,g)$ be an asymptotically harmonic manifold. Then, for all points $p\neq q\in M$, there exists a diffeomorphism $F:M\rightarrow M$ such that $F(p)=q$ and
\begin{equation*}
\int_M f(x)d\mu(x) = \int_M f(F(x))d\mu(x),
\end{equation*}
for all $f\in C_c(M)$, where $d\mu$ is the Riemannian measure on $M$.
\end{theorem}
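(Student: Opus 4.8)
The plan is to produce $F$ as a composition of two gradient flows of Busemann functions, chosen so that the volume distortions of the two flows are reciprocal while the composition still carries $p$ to $q$.

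First I would record the infinitesimal data. For a unit vector $v$, recall that $|\nabla b_v|\equiv 1$ on a Hadamard manifold and that the integral curves of $-\nabla b_v$ are exactly the unit-speed geodesics running toward the endpoint $\gamma_v(\infty)$; since $(M,g)$ is complete, $-\nabla b_v$ is a complete vector field, so its flow $\psi^v_t$ is a one-parameter group of diffeomorphisms of $M$ (this uses $b_v\in C^2$, cf.\ \cite{HI}; in the asymptotically harmonic setting $b_v$ is in fact smooth). The hypothesis of asymptotic harmonicity is precisely the existence of a constant $h$ with $\Delta b_v\equiv h$ for every unit $v$. Hence, for the Riemannian measure $d\mu$,
\[
\mathcal L_{-\nabla b_v}\,d\mu=\operatorname{div}(-\nabla b_v)\,d\mu=-(\Delta b_v)\,d\mu=-h\,d\mu,
\]
which integrates to $(\psi^v_t)^*d\mu=e^{-ht}\,d\mu$. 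The point I want to emphasize is that the Jacobian factor $e^{-ht}$ is a genuine constant, independent of the base point — this is exactly where asymptotic harmonicity is used, and it is what will make the distortions of the two flows cancel.

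Now, given $p\neq q$, let $\sigma:\mathbb R\to M$ be the unique unit-speed geodesic line with $\sigma(0)=p$ and $\sigma(\ell)=q$, where $\ell=d(p,q)>0$, and set $v_\pm=\pm\sigma'(0)$, so that $\gamma_{v_+}$ and $\gamma_{v_-}$ are the two opposite rays of $\sigma$. Along $\sigma$ the fields $-\nabla b_{v_+}$ and $-\nabla b_{v_-}$ equal $+\sigma'$ and $-\sigma'$ respectively, hence the flows restrict to translations: $\psi^{v_+}_t(\sigma(s))=\sigma(s+t)$ and $\psi^{v_-}_t(\sigma(s))=\sigma(s-t)$. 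I would then define
\[
F:=\psi^{v_-}_{-\ell/2}\circ\psi^{v_+}_{\ell/2},
\]
which is a diffeomorphism of $M$, and verify $F(p)=\psi^{v_-}_{-\ell/2}(\sigma(\ell/2))=\sigma(\ell)=q$, together with
\[
F^*d\mu=(\psi^{v_+}_{\ell/2})^*(\psi^{v_-}_{-\ell/2})^*d\mu=(\psi^{v_+}_{\ell/2})^*\bigl(e^{h\ell/2}\,d\mu\bigr)=e^{h\ell/2}e^{-h\ell/2}\,d\mu=d\mu.
\]
Thus $F$ preserves $d\mu$, and the identity $\int_M f(F(x))\,d\mu(x)=\int_M f(x)\,d\mu(x)$ for $f\in C_c(M)$ follows from the change-of-variables formula.

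I do not expect a genuine obstacle here: the whole content sits in the exponential volume-variation formula $(\psi^v_t)^*d\mu=e^{-ht}\,d\mu$, i.e.\ the constancy of $\operatorname{div}\nabla b_v=\Delta b_v$, so that is the step I would present most carefully — in particular noting that the same constant $h$ works for $v_+$ and $v_-$, which is just the form of the hypothesis. The only other thing to watch is regularity: that each $\psi^v_t$ is an honest diffeomorphism, which holds because Busemann functions on asymptotically harmonic manifolds are smooth (and in any case $C^1$-regularity already suffices for the stated integral identity).
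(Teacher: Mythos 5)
Your proof is correct, but it takes a genuinely different route from the paper's. You exploit the fact that for each unit vector $v$ the time-$t$ map of the flow of $-\nabla b_v$ scales $d\mu$ by the \emph{constant} factor $e^{-ht}$ (constancy of $\Delta b_v$ being exactly the hypothesis), and then compose the forward flow for $v_+=\sigma'(0)$ with the backward flow for $v_-=-\sigma'(0)$ at times $\pm\ell/2$, so the Jacobian factors cancel globally while the displacements along $\sigma$ add up to carry $p$ to $q$. The paper instead works with a single Busemann function $b_v$: it parametrizes $M$ by $\phi(t,x)=\exp_x(t\nabla b_v(x))$ over the horosphere $S_0$, proves $(\phi_t)^*(d\mu_t)=e^{ht}d\mu_0$ by a Jacobi/Riccati computation (Lemma \ref{lem_measurez}), and then defines $F(\phi(t,x))=\phi(\alpha(t),x)$ with $\alpha$ solving $\alpha'(t)e^{h\alpha(t)-ht}=1$, i.e.\ $\alpha(t)=\frac1h\ln(e^{ht}+e^{ht_0}-1)$ for $h>0$ (and $\alpha(t)=t+t_0$ for $h=0$), checking volume preservation via the coarea formula (Proposition \ref{prop_cha}). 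Your construction avoids both the coarea formula and the ODE for $\alpha$, and is in spirit close to the paper's Lemma 4.1 (divergence-freeness of $\nabla b_{v_1}-\nabla b_{v_2}$), here implemented at the level of time-$t$ maps rather than a single vector field; what the paper's map buys instead is compatibility with the horosphere foliation of one fixed Busemann function ($S_t\mapsto S_{\alpha(t)}$), which fits the themes of its later sections. Two points you flagged are indeed the only ones needing care, and both are fine at the paper's level of rigor: the hypothesis is used (as in the paper's Lemma 4.1) in the form that the same constant $h$ serves all unit vectors, in particular $v_+$ and $v_-$; and $b_v$ is $C^2$ with $\Delta b_v$ constant, whence smooth by elliptic regularity, so each $\psi^v_t$ is a genuine diffeomorphism and the flows are complete since $\|\nabla b_v\|\equiv 1$.
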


Szab\'o \cite{Sza} proved that a Riemannian manifold is a harmonic manifold if and only if
the volume of the intersection of geodesic spheres  depends only on the radii and the distance between the centers of the geodesic spheres.
Csik\'os and M. Horv\'ath \cite{CsH, CsH2} proved that the intersections can be restricted to the cases with same radii, and they
also proved that the volume of a tubular neighborhood about a geodesic depends only
on the length of the geodesic and the radius if and only if the Riemannian manifold is harmonic.
In addition, some relations between integrals and measures on a harmonic manifold and its ideal boundary were found by Itoh and Satoh \cite{ItS} and by Rouvi\'ere \cite{Rou}. {{Knieper and Peyerimhoff \cite{KnP} also considered the integrals and measures on harmonic manifolds to find a solution of the Dirichlet problem at infinity,}} and Biswas, Knieper, and Peyerimhoff \cite{BKP} proved that there exists a Fourier transform between harmonic manifolds and its ideal boundary.

 A Hadamard manifold $(M, g)$ is called a visibility manifold, or satisfies the visibility condition,
 if, for any two different points $v_1$, $v_2$ at infinity, there is a
geodesic $\gamma : \mathbb{R} \rightarrow M$ with $\gamma(\infty) = v_1$, $\gamma(-\infty) = v_2$
(see section 2). 
All intersections of two horospheres in a Hadamard manifold are bounded if and only if the Hadamard manifold is a visibility manifold (see Definition \ref{def_visi} and Lemma \ref{lem_visibility}).
We note that the volume of an intersection of two horospheres in a visibility manifold is finite.  
In particular, for every harmonic manifold which satisfies the visibility condition, the volume of the intersection $b_{v_1}^{-1}(c_1)\cap b_{v_2}^{-1}(c_2)$ of two horospheres is independent of $c_1-c_2$.
For asymptotically harmonic manifolds which satisfy the visibility condition (i.e., asymptotically harmonic, visibility manifolds),
in this article, we prove that some integrals on the intersection of two horospheres
are independent of the difference between values of corresponding Busemann functions.
We also obtain an upper bound of the volume of the intersection of two horospheres.
Throughout this paper, we assume that the dimension of the manifold is $n\geq 2$.
Our main theorem is the following:

\begin{theorem}\label{thm_volz_bound}
Let $(M,g)$ be an asymptotically harmonic, visibility manifold.
 Let $p\in M$, $v_1\neq v_2\in T_p^1M$, and $c\in\mathbb{R}$.
Then there exists a constant $C>0$ such that
the $(n-2)$-dimensional volume of the intersection $b_{v_1}^{-1}(c-t)\cap b_{v_2}^{-1}(c+t)$ is less than $C$, for all $t\in\mathbb{R}$.
\end{theorem}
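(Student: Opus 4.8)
The plan is the following. Write $b_i := b_{v_i}$, let $\xi_i := \gamma_{v_i}(\infty)$ be the corresponding ideal points (they are distinct, since distinct geodesic rays issuing from $p$ have distinct endpoints at infinity), and set
$$I_t := b_1^{-1}(c-t)\cap b_2^{-1}(c+t),\qquad \rho := \langle \nabla b_1,\nabla b_2\rangle .$$
Since $-\nabla b_i(x)$ is the initial velocity of the geodesic ray from $x$ to $\xi_i$, we have $\rho(x)=\cos\angle_x(\xi_1,\xi_2)$. Recall that the mean curvature of a horosphere $b_v^{-1}(c)$ equals $\Delta b_v$, so asymptotic harmonicity gives $\Delta b_1=\Delta b_2=h$ for one fixed constant $h$; hence the vector field $Y := \nabla b_1-\nabla b_2$ is divergence free. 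Moreover $Y(b_1+b_2)=|\nabla b_1|^2-|\nabla b_2|^2=0$, so $Y$ is tangent to the hypersurface $\Sigma := \{\,b_1+b_2=2c\,\}$, which contains every $I_t$ and is foliated by them (on $I_t$ one has $b_1-b_2\equiv -2t$). Since $\langle\nabla(b_1-b_2),\nabla(b_1+b_2)\rangle=0$, inside $\Sigma$ we have $\nabla^{\Sigma}(b_1-b_2)=Y$ with $|Y|^2=2(1-\rho)$; and $\rho<1$ everywhere (equality would force the two rays from $x$ to coincide), so $Y$ never vanishes.

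\emph{Step 1 (transport and an invariant integral).} Put $Z := Y/|Y|^2$. Along a trajectory of $Z$ the function $b_1+b_2$ is constant while $b_1-b_2$ grows at unit speed, so $b_1$ and $b_2$ stay bounded on bounded time intervals; as intersections of horoballs based at distinct ideal points are bounded in a visibility manifold (cf.\ Lemma~\ref{lem_visibility}), every trajectory remains in a fixed compact set in bounded time, hence $Z$ is complete. Its flow $\psi_s$ maps $I_t$ diffeomorphically onto $I_{t-s/2}$. Because $|Y|^2 Z = Y$ is divergence free, $\psi_s$ preserves the measure $|Y|^2\,d\mu=2(1-\rho)\,d\mu$. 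Applying the coarea formula to $b_1+b_2$ on $M$ and then to $(b_1-b_2)|_{\Sigma}$ on $\Sigma$ gives
$$d\mu=\frac{1}{2\sqrt{1-\rho^2}}\,d(b_1+b_2)\,d(b_1-b_2)\,d\mu_{n-2},$$
with $d\mu_{n-2}$ the $(n-2)$-dimensional Riemannian measure on the leaf, and therefore
$$|Y|^2\,d\mu=\sqrt{\frac{1-\rho}{1+\rho}}\;d(b_1+b_2)\,d(b_1-b_2)\,d\mu_{n-2}.$$
Since $\psi_s$ preserves this measure, keeps $b_1+b_2$ fixed and translates $b_1-b_2$ by $s$, the leaf map $\psi_s:I_t\to I_{t-s/2}$ pushes $\sqrt{\tfrac{1-\rho}{1+\rho}}\,d\mu_{n-2}$ on $I_t$ forward to the same density on $I_{t-s/2}$. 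In particular
$$\Theta := \int_{I_t}\sqrt{\frac{1-\rho}{1+\rho}}\;d\mu_{n-2}=\int_{I_t}\tan\!\Big(\tfrac12\,\angle_x(\xi_1,\xi_2)\Big)\,d\mu_{n-2}(x)$$
is a finite constant, independent of $t$ — the asymptotically harmonic analogue of the invariance for harmonic manifolds recalled above.

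\emph{Step 2 (the volume bound).} It suffices to show that $\rho$ is bounded away from $1$ on $\Sigma$, say $\rho\le 1-2\delta$ with $\delta>0$; then $\sqrt{\tfrac{1-\rho}{1+\rho}}\ge\sqrt{\delta}$ on $\Sigma$, so
$$\operatorname{vol}_{n-2}(I_t)=\int_{I_t}1\,d\mu_{n-2}\le\frac{1}{\sqrt{\delta}}\int_{I_t}\sqrt{\frac{1-\rho}{1+\rho}}\,d\mu_{n-2}=\frac{\Theta}{\sqrt{\delta}}=:C,$$
uniformly in $t$. Equivalently, $\angle_x(\xi_1,\xi_2)$ must stay bounded below on $\Sigma$. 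Let $\sigma$ be the bi-infinite geodesic joining $\xi_1$ to $\xi_2$, which exists by the visibility hypothesis; on $\sigma$ one has $\nabla b_1=-\nabla b_2$, so $\rho=-1$ and $b_1+b_2$ is constant there. Using the visibility condition, $b_1(x)+b_2(x)$ is controlled, up to a bounded additive error, by $2\,d(x,\sigma)$, so $d(\cdot,\sigma)$ is bounded on $\Sigma$; and, again by visibility, bounded distance to $\sigma$ forces $\angle_x(\xi_1,\xi_2)$ away from $0$ on $\Sigma$, giving the required $\delta$. (The single value $c=c_0$ for which $\sigma\subset\Sigma$, where $\rho=-1$ occurs on $\Sigma$, is exceptional: there the two horospheres cutting out $I_t$ are tangent along $\sigma$ and, having opposite inner normals, meet by convexity only in the point $I_t\cap\sigma$, so the assertion is trivial.)

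The main obstacle is Step 2. Step 1 is a formal consequence of $\nabla b_1-\nabla b_2$ being divergence free — precisely where asymptotic harmonicity enters — together with bookkeeping through the coarea formula. In Step 2 the pointwise inequality $\rho<1$ is immediate, but excluding $\rho\to 1$ along sequences escaping to infinity within $\Sigma$ is exactly where the global visibility hypothesis is genuinely needed, via the comparison of $b_1+b_2$ with $d(\cdot,\sigma)$ and the quantitative visibility estimate bounding $\angle_x(\xi_1,\xi_2)$ below in terms of $d(x,\sigma)$.
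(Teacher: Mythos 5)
Your Step 1 is correct and coincides with the easy half of the paper's argument: the flow of $(\nabla b_1-\nabla b_2)/\|\nabla b_1-\nabla b_2\|^2$ preserves the density $\sqrt{\tfrac{1-\rho}{1+\rho}}\,d\mu_{n-2}$ on the leaves, so $\Theta=V(s,t)$ is independent of $t$ (this is equation \eqref{eq_Vz_t} in the paper). The problem is Step 2. You reduce everything to the claim $\sup_{\Sigma}\rho<1$, and you propose to prove it by (a) the two--sided comparison $b_1+b_2-c_0\asymp 2d(\cdot,\sigma)$ up to bounded additive error, and (b) a quantitative visibility estimate bounding $\angle_x(\xi_1,\xi_2)$ below in terms of $d(x,\sigma)$. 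Neither is justified, and neither follows from the visibility axiom as used in the paper. For (a), only the inequality $b_1(x)+b_2(x)\le c_0+2d(x,\sigma)$ is immediate (triangle inequality through the foot point); the reverse inequality, which is the one you need to bound $d(\cdot,\sigma)$ on $\Sigma$, is a thin--triangles/Gromov--hyperbolicity statement that visibility alone does not supply (visibility manifolds without cocompact symmetry need not be Gromov hyperbolic, and Lemma \ref{lem_visibility} only gives boundedness of each individual $I_t$, not of their union $\Sigma$). For (b), the Eberlein--O'Neill visibility axiom gives ``$d(x,\sigma)$ large $\Rightarrow$ angle small,'' which is the converse of what you need; ``$d(x,\sigma)$ bounded $\Rightarrow$ angle bounded below'' is a uniform divergence estimate over the noncompact set $\Sigma$ and does not follow from pointwise $\rho<1$ plus compactness of each $I_t$.

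The conclusion $\sup_\Sigma\rho<1$ is nevertheless true, but for a reason you did not use: asymptotic harmonicity. Since each $\operatorname{Hess}b_{v_i}$ is positive semi-definite with trace $h$, every eigenvalue is at most $h$ (Lemma \ref{lem_bounded}), whence $Y[\ln\tfrac{1}{1-\beta}]\le h$ along the flow of $(\nabla b_1+\nabla b_2)/\|\nabla b_1+\nabla b_2\|^2$; integrating from $D$, where $\beta=-1$, gives $\beta\le 1-2e^{-hs}$ on $S(s,t)$ with $s=2c-c_0$ fixed (Lemma \ref{lem_vol_S_dec_and_betaz}). Substituting this for your Step 2 closes your argument and in fact yields a shorter proof than the paper's, since you then only need the invariance of $V$: the paper instead proves the harder $t$-invariance of $W(s,t)=\int_{S(s,t)}\sqrt{\tfrac{1+\beta}{1-\beta}}\,d\mu'$ as well (the bulk of Theorem \ref{thm_intersectionz}, via $\partial_tW(s+\varepsilon,t)=e^{hs/2}\partial_tW(\varepsilon,t)$ and a limiting argument as $\varepsilon\to0^{+}$) and concludes from the pointwise identity
\begin{equation*}
1\le\frac{1}{\sqrt{1-\beta^2}}=\frac12\left(\sqrt{\frac{1-\beta}{1+\beta}}+\sqrt{\frac{1+\beta}{1-\beta}}\right),
\end{equation*}
which requires no uniform upper bound on $\beta$. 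As written, however, your Step 2 rests on unproved metric comparisons, so the proof has a genuine gap.
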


\section{Preliminary}

The integration of a function on a connected Riemannian manifold $(M,g)$ can be computed in terms of the integrals on hypersurfaces:

\begin{proposition}\cite{Cha}\label{prop_cha}
Let $(M,g)$ be a connected Riemannian manifold.
Let $\varphi:M\rightarrow\mathbb{R}$ be a $C^1$ function such that $\nabla\varphi$ is non-vanishing on $M$, and let $S_t$ be the hypersuface defined by $S_t=\{x\in M:\varphi(x)=t\}$, for all $t\in\mathbb{R}$. Then, for all $f\in C_c(M)$ and $t\in\mathbb{R}$,
\begin{equation*}
\int_M f(x)d\mu(x)=\int_{\mathbb{R}}\int_{S_t}\frac{f(x)}{\|\nabla\varphi(x)\|}d\mu_t(x)dt,
\end{equation*}
where $d\mu$ is the Riemannian measure on $M$ and $d\mu_t$ is the induced Riemannian measure on $S_t$.
\end{proposition}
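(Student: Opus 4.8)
The plan is to prove the identity directly by constructing coordinates adapted to the level sets of $\varphi$; this is the special case of the smooth coarea formula for a real-valued function, and its only nontrivial ingredient is the appearance of the factor $1/\|\nabla\varphi\|$ relating the Riemannian volume to $d\mu_t\,dt$. First I would note that, since $\nabla\varphi$ is nowhere vanishing, every real number $t$ is a regular value of $\varphi$, so by the regular value theorem each level set $S_t$ is a smooth embedded hypersurface whose unit normal at $x$ is $N(x)=\nabla\varphi(x)/\|\nabla\varphi(x)\|$. Because $f\in C_c(M)$ and $\varphi$ is continuous, $\varphi(\operatorname{supp} f)$ is a compact subset of $\mathbb{R}$, so only a bounded range of $t$ contributes and each slice integral $\int_{S_t} f\,d\mu_t$ reduces to an integral over the compact set $S_t\cap\operatorname{supp} f$; in particular all integrals in question are finite and no convergence issue arises.

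Next I would reduce to a local computation using a partition of unity. Cover $\operatorname{supp} f$ by finitely many open sets on each of which adapted coordinates can be built, choose a subordinate partition of unity, and prove the identity for each piece; additivity of all three integrals then yields the general case. To build adapted coordinates on such a patch, I would use the flow $\Phi_s$ of the vector field $X=\nabla\varphi/\|\nabla\varphi\|^2$. Since $d\varphi(X)=\langle\nabla\varphi,\nabla\varphi\rangle/\|\nabla\varphi\|^2=1$, one has $\varphi(\Phi_s(x))=\varphi(x)+s$, so the flow carries level sets to level sets, $\Phi_s(S_{t_0})\subset S_{t_0+s}$. Parametrizing a piece of a single level set $S_{t_0}$ by $y=(y_1,\dots,y_{n-1})\mapsto\sigma(y)$ and setting $\Psi(s,y)=\Phi_s(\sigma(y))$ gives, because $X$ is transverse to the level sets, a local diffeomorphism in which $s=\varphi-t_0$ and each slice $\{s=\mathrm{const}\}$ is the corresponding level set.

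The heart of the matter is the factorization of the Riemannian volume form in these coordinates. Here $\partial_s=X$ is parallel to $N$, hence orthogonal to the slice vectors $\partial_{y_i}$, which are tangent to $S_t$; by this orthogonality the Gram determinant splits as $\det g(\partial_s,\partial_{y_i})=\|\partial_s\|^2\,\det\bigl(g|_{TS_t}\bigr)$, and since $\|\partial_s\|=\|X\|=1/\|\nabla\varphi\|$ one obtains
\begin{equation*}
d\mu=\frac{1}{\|\nabla\varphi\|}\,d\mu_t\,ds,
\end{equation*}
where $d\mu_t$ is exactly the induced volume density of the slice. Substituting $s=t-t_0$ and applying Fubini's theorem to integrate first over the slice $S_t$ and then over $t$ produces the claimed equality on the patch, and summing against the partition of unity completes the proof.

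I expect the main obstacle to be the clean justification of this volume factorization away from any product assumption: the level sets $S_t$ are in general neither parallel nor isometric under the flow, so the splitting $d\mu=\|\nabla\varphi\|^{-1}\,d\mu_t\,ds$ must be read pointwise as an orthogonal decomposition $T_xM=\mathbb{R}\,N(x)\oplus T_xS_{\varphi(x)}$ rather than as a global product structure. Once this infinitesimal orthogonality is established, the normal speed of the level sets with respect to $t$ is precisely $1/\|\nabla\varphi\|$, and the remaining steps are the standard partition-of-unity and Fubini arguments.
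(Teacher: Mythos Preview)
Your argument is correct: this is the standard proof of the smooth coarea formula for a real-valued submersion, and the key step---the orthogonal block decomposition of the Gram matrix in flow-box coordinates along $X=\nabla\varphi/\|\nabla\varphi\|^2$, giving $d\mu=\|\nabla\varphi\|^{-1}\,d\mu_t\,ds$---is carried out correctly. The paper does not supply its own proof of this proposition; it is quoted as a known result from Chavel's textbook, so there is no in-paper argument to compare against.
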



We introduce an infinitesimal ``volume preserving" mapping on $(M,g)$ in \cite{HaM}.
We say that a vector field $X$ on $M$ is volume preserving if
$L_X(d\mu)=0$,
where $L_X$ is the Lie derivative with respect to $X$.
If $\phi_t$ is the flow generated by $X$, then we call $\phi_t$ volume preserving if $X$ is volume preserving.
In that case, we have
$(\phi_t)^*(d\mu)=d\mu.$
This flows preserve integrals on a Riemannian manifold $(M,g)$ since it preserves the Riemannian measure if it is a diffeomorphism.

\begin{proposition}\label{prop_volprez}
Let $\phi_t$ be a flow on a Riemannian manifold $(M,g)$ and suppose that $\phi_t$ is a diffeomorphism on $M$ for all $t\in\mathbb{R}$. Then {{$\phi_t$}} is volume preserving if and only if
\begin{equation}\label{eq_inte}
\int_M f(x) d\mu(x) = \int_M f(\phi_t(x)) d\mu(x),
\end{equation}
for all $f\in C_c(M)$, where $d\mu$ is the Riemannian measure on $M$.
\end{proposition}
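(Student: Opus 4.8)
The plan is to translate the integral identity \eqref{eq_inte} into the pullback statement $\phi_t^{*}(d\mu)=d\mu$ for every $t\in\mathbb{R}$, and then to recognize the latter as the integrated form of $L_X(d\mu)=0$, where $X$ is the (complete) vector field generating the flow $\phi_t$. I will use freely that $L_X(d\mu)=(\operatorname{div}X)\,d\mu$ and the change of variables formula for diffeomorphisms.

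First I would carry out the change of variables. Since each $\phi_t$ is a diffeomorphism of $M$, the pullback of the Riemannian \emph{density} $d\mu$ is again a smooth positive density, so we may write $\phi_t^{*}(d\mu)=\rho_t\,d\mu$ with $\rho_t\in C^{\infty}(M)$, $\rho_t>0$ (working with densities rather than volume forms avoids any orientability hypothesis). The change of variables formula gives, for every $f\in C_c(M)$,
\[
\int_M (f\circ\phi_t)\,\rho_t\,d\mu=\int_M f\,d\mu .
\]
Applying this with $f$ replaced by $h\circ\phi_{-t}\in C_c(M)$ yields $\int_M h\,\rho_t\,d\mu=\int_M (h\circ\phi_{-t})\,d\mu$ for all $h\in C_c(M)$ and all $t\in\mathbb{R}$; after the substitution $t\mapsto -t$ this shows that \eqref{eq_inte} holds for all $f$ and all $t$ exactly when
\[
\int_M f\,(\rho_{-t}-1)\,d\mu=0\qquad\text{for all }f\in C_c(M),\ t\in\mathbb{R}.
\]
Because $\rho_{-t}-1$ is continuous, a function annihilated by integration against every element of $C_c(M)$ must vanish identically, so this is equivalent to $\rho_t\equiv 1$ for every $t$, i.e.\ to $\phi_t^{*}(d\mu)=d\mu$ for every $t\in\mathbb{R}$.

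Next I would pass from this one-parameter family of pullbacks to the infinitesimal condition, using the standard identity $\frac{d}{ds}\big(\phi_s^{*}\alpha\big)=\phi_s^{*}\big(L_X\alpha\big)$ with $\alpha=d\mu$. If $L_X(d\mu)=0$ then $s\mapsto\phi_s^{*}(d\mu)$ has vanishing derivative, hence is constant and equal to its value $d\mu$ at $s=0$, so $\phi_s^{*}(d\mu)=d\mu$ for all $s$; conversely, if $\phi_s^{*}(d\mu)=d\mu$ for all $s$, differentiating at $s=0$ gives $L_X(d\mu)=0$. Chaining this equivalence with the previous step proves the proposition.

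I do not foresee a genuine obstacle here: the argument is bookkeeping around the change of variables formula and the Lie derivative identity, and $X$ is complete precisely because $\phi_t$ is assumed to be a diffeomorphism of $M$ for all $t\in\mathbb{R}$. The only two points that deserve a line of care are that the pullback of the Riemannian density (rather than of a top-degree form) is again a smooth positive multiple of $d\mu$, so no orientation is needed, and the elementary measure-theoretic fact that a continuous density on $M$ which integrates to zero against all of $C_c(M)$ must be identically one.
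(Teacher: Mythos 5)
Your argument is correct and follows essentially the same route as the paper: use the change of variables formula to convert the integral identity into the statement $\phi_t^*(d\mu)=\rho_t\,d\mu$ with $\int_M f(\rho_t-1)\,d\mu=0$ for all $f\in C_c(M)$, conclude $\rho_t\equiv 1$, and identify $\phi_t^*(d\mu)=d\mu$ for all $t$ with $L_X(d\mu)=0$ by differentiating in $t$. Your version merely spells out two points the paper leaves implicit (the density argument forcing $\rho_t\equiv 1$, and the passage between the pullback identity and the Lie derivative condition), so no substantive difference.
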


\begin{proof}
If $\phi_t$ is volume preserving, then, clearly, equation \eqref{eq_inte} holds.
Now, suppose that equation \eqref{eq_inte} holds for all $f\in C_c(M)$. Then
\begin{equation}\label{eq_volFtprop}
\int_M f(\phi_t(x))d\mu(x)=\int_M f(x)d\mu(x)=\int_M f(\phi_t(x))(\phi_t)^*(d\mu)(x)
\end{equation}
for all $f\in C_c(M)$. We denote $(\phi_t)^*(d\mu)=f_0 \: d\mu$ for some function $f_0$. Then, by equation \eqref{eq_volFtprop}, $f_0=1$ and $(\phi_t)^*(d\mu)=d\mu$.
\end{proof}


Now, we define asymptotic geodesic rays:

\begin{definition}\cite{BrH}
Let $(M,g)$ be a Hadamard manifold, and
$\gamma_1,\gamma_2:[0,\infty)\rightarrow M$ geodesic rays. $\gamma_1,\gamma_2$ are said to be asymptotic if there exists a constant $C>0$ such that
\begin{equation*}
d(\gamma_1(t),\gamma_2(t))\leq C,
\end{equation*}
for all $t\geq 0$.
\end{definition}

This gives
an equivalence relation on geodesic rays: two geodesic rays are equivalent if and only if they are asymptotic.
The set $\partial_{\infty} M$ of points at infinity is the set of equivalence classes of this relation. 
This is also called the ideal boundary of $(M,g)$.
Consequently, there exists a set of distance functions from each point and their limits.
Let $C(M)$ be the space of continuous functions on $M$ equipped with the topology of uniform convergence on bounded subsets.
Let $C_*(M)$ denote the quotient of $C(M)$ by the 1-dimensional subspace of constant functions.
For all $v\in T^1M$ and $t\in\mathbb{R}$,
\begin{equation*}
\nabla b_v(\gamma(t))=-\gamma'(t),
\end{equation*}
where $\gamma$ is a geodesic ray in $(M,g)$ asymptotic to $\gamma_v$. In particular, the image of Busemann functions in $C_*(M)$ can be associated to geodesic rays. The points at infinity also corresponds to the images of Busemann functions in $C_*(M)$.

\begin{proposition}\cite{BrH}
Let $(M,g)$ be a Hadamard manifold.
Then the Busemann functions associated to asymptotic geodesic rays in $M$ are equal up to addition of a constant.
\end{proposition}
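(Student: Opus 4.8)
The plan is to prove the stronger infinitesimal statement that the two Busemann functions have the same gradient everywhere; since $M$ is connected, this forces their difference to be a constant. Write $v_1=\gamma_1'(0)$ and $v_2=\gamma_2'(0)$ for the initial velocities, so that $b_{v_1}$ and $b_{v_2}$ are the Busemann functions attached to $\gamma_1$ and $\gamma_2$. As a first orientation, the triangle inequality gives $|d(x,\gamma_1(t))-d(x,\gamma_2(t))|\le d(\gamma_1(t),\gamma_2(t))\le C$, so the difference $b_{v_1}-b_{v_2}=\lim_{t\to\infty}\bigl(d(\cdot,\gamma_1(t))-d(\cdot,\gamma_2(t))\bigr)$ is a bounded function; the task is to upgrade boundedness to constancy.

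The key input is the gradient identity recorded just above the statement: for any geodesic ray $\gamma$ asymptotic to $\gamma_v$ one has $\nabla b_v(\gamma(t))=-\gamma'(t)$, and in particular $\nabla b_v(\gamma(0))=-\gamma'(0)$. I also use the standard property of Hadamard manifolds that through each point there passes a unique geodesic ray in each prescribed asymptotic class. Now fix an arbitrary $y\in M$ and let $\sigma\colon[0,\infty)\to M$ be the unique geodesic ray with $\sigma(0)=y$ that is asymptotic to $\gamma_1$. Because being asymptotic is an equivalence relation and $\gamma_1$ is asymptotic to $\gamma_2$ by hypothesis, transitivity shows that $\sigma$ is asymptotic to $\gamma_2$ as well.

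Applying the gradient identity to $\sigma$ --- first as a ray asymptotic to $\gamma_{v_1}$, then as a ray asymptotic to $\gamma_{v_2}$ --- at parameter $t=0$ yields
\begin{equation*}
\nabla b_{v_1}(y)=-\sigma'(0)=\nabla b_{v_2}(y).
\end{equation*}
Since $y$ was arbitrary, $\nabla(b_{v_1}-b_{v_2})\equiv 0$ on $M$, and connectedness of the Hadamard manifold makes $b_{v_1}-b_{v_2}$ constant, which is the assertion that the two Busemann functions agree up to an additive constant.

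The main obstacle, and the only place the geometry is genuinely used, is justifying the gradient identification at $y$: one must know that the asymptotic ray $\sigma$ from $y$ exists and is unique, so that the two expressions $-\sigma'(0)$ obtained from the two asymptotic classes really do coincide. This is precisely where the nonpositive-curvature (Hadamard) hypothesis and the transitivity of the asymptotic relation do the work.
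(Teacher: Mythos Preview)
The paper does not supply its own proof of this proposition; it is quoted from Bridson--Haefliger as background, so there is no argument in the paper to compare against.

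Your argument is correct. You combine the gradient identity $\nabla b_v(\gamma(t))=-\gamma'(t)$ for rays $\gamma$ asymptotic to $\gamma_v$ (stated in the paper immediately before the proposition), the existence and uniqueness of the asymptotic ray through each point (the paper's Proposition~\ref{prop_unique}), and transitivity of the asymptotic relation; from these the equality $\nabla b_{v_1}=\nabla b_{v_2}$ and hence constancy of $b_{v_1}-b_{v_2}$ are immediate.

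One caution worth recording: the gradient identity in the generality you invoke---for an \emph{arbitrary} ray $\gamma$ asymptotic to $\gamma_v$, not merely for $\gamma_v$ itself---is essentially equivalent to the proposition you are proving, so you should check that it has an independent justification rather than being deduced from the proposition. It does: since $b_v$ is convex and $1$-Lipschitz, the function $t\mapsto b_v(\gamma(t))+t$ is convex with derivative bounded below by $0$, while the asymptotic bound $d(\gamma(t),\gamma_v(t))\le C$ together with the triangle inequality gives $|b_v(\gamma(t))+t|\le C$; a convex, bounded-above function on $[0,\infty)$ is non-increasing, hence constant, which yields $\nabla b_v(\gamma(t))=-\gamma'(t)$. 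With that point secured, your proof is complete.
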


\begin{proposition}\cite{BrH}\label{prop_unique} 
Let $(M,g)$ be a Hadamard manifold, $\theta\in\partial_{\infty}M$, and $x\in M$.
Then there exists a unique geodesic ray $\gamma\in\theta$ from $x$.
\end{proposition}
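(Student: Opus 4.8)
The plan is to prove existence by a limiting construction of geodesic segments and uniqueness by the convexity of the metric. Throughout I would use two standard features of a Hadamard manifold $(M,g)$: first, by the Cartan--Hadamard theorem $\exp_x\colon T_xM\to M$ is a diffeomorphism for every $x$, so any two points are joined by a unique geodesic and geodesics depend continuously on their endpoints; second, for any two geodesics $\gamma_1,\gamma_2\colon[0,1]\to M$ parametrized proportionally to arc length, the function $s\mapsto d(\gamma_1(s),\gamma_2(s))$ is convex (the convexity of the metric in nonpositive curvature, as in \cite{BrH}).

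For existence, fix a representative ray $\sigma\in\theta$ with $\sigma(0)=y$, and for each $n$ let $\gamma_n$ be the unit-speed geodesic from $x$ to $\sigma(n)$, of length $L_n=d(x,\sigma(n))$. Since $|L_n-n|=|d(x,\sigma(n))-d(y,\sigma(n))|\le d(x,y)$ we have $L_n\to\infty$. The initial velocities $\gamma_n'(0)$ lie in the compact unit sphere of $T_xM$, so after passing to a subsequence they converge to a unit vector $v$; set $\gamma=\gamma_v$. By continuous dependence on initial data, $\gamma_n\to\gamma$ uniformly on compact subsets of $[0,\infty)$. To see that $\gamma\in\theta$, I would apply the convexity estimate to the two geodesics from $x$ (along $\gamma_n$) and from $y$ (along $\sigma$) that share the endpoint $\sigma(n)$: after reparametrizing both on $[0,1]$, convexity together with the boundary values $d(x,y)$ at $s=0$ and $0$ at $s=1$ yields $d(\gamma_n(sL_n),\sigma(sn))\le(1-s)d(x,y)$. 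Fixing $t$ and taking $s=t/n$, and using $L_n/n\to 1$, the limit gives $d(\gamma(t),\sigma(t))\le d(x,y)$ for all $t\ge 0$; hence $\gamma$ is asymptotic to $\sigma$ and starts at $x$.

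For uniqueness, suppose $\gamma_1,\gamma_2$ are geodesic rays with $\gamma_1(0)=\gamma_2(0)=x$ and both asymptotic to $\theta$. Then they are asymptotic to each other, so $h(t)=d(\gamma_1(t),\gamma_2(t))$ is bounded; it is also convex and satisfies $h(0)=0$ and $h\ge 0$. A bounded convex function on $[0,\infty)$ is nonincreasing, and a nonincreasing nonnegative function with $h(0)=0$ must vanish identically. Thus $\gamma_1(t)=\gamma_2(t)$ for all $t$, giving $\gamma_1=\gamma_2$.

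The main obstacle I expect is the existence step, specifically making the convexity comparison and the passage to the limit fully rigorous: one must be careful that the two geodesics are parametrized compatibly on a common interval, that the reparametrization factor $L_n/n$ is controlled as $n\to\infty$, and that uniform convergence on compacta legitimately transfers the pointwise bound to the limiting ray. Once the bound $d(\gamma(t),\sigma(t))\le d(x,y)$ is established, both the asymptoticity in the existence part and the uniqueness argument follow cleanly from convexity.
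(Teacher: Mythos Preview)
The paper does not give its own proof of this proposition; it simply cites it from \cite{BrH}. Your argument is precisely the standard proof one finds there: existence via a limit of geodesic segments to points $\sigma(n)$ along a representative ray, with the asymptoticity bound coming from convexity of the distance between reparametrized geodesics, and uniqueness from the fact that a bounded, nonnegative, convex function on $[0,\infty)$ vanishing at $0$ is identically zero. Both steps are correct as you have written them, and the care you flag in the existence step (compatible parametrizations on $[0,1]$, the control $|L_n-n|\le d(x,y)$ giving $L_n/n\to 1$, and passing the inequality through uniform convergence on compacta) is exactly what is needed and is handled correctly.
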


By the propositions, points at infinity bijectively correspond to images of Busemann functions in $C_*(M)$.

For two elements of $\partial_{\infty}M$, there exists a geodesic from one direction to another direction. If such geodesic exists for all pairs of distinct elements of $\partial_{\infty}M$, then we call such a Hadamard manifold a visibility manifold:

\begin{definition}\cite{BGS}\label{def_visi}
A Hadamard manifold $(M,g)$ is called a visibility manifold if, for all $p\in M$ and $v_1\neq v_2\in T_p^1M$,
there exists a geodesic ray $\gamma$ such that $\gamma$ is a asymptotic to $\gamma_{v_1}$, and the geodesic ray
$t\mapsto \gamma(-t)$ is asymptotic to $\gamma_{v_2}$.
\end{definition}
Such geodesic ray $\gamma$ is said to be bi-asymptotic to $v_1,v_2$. There exist several equivalent conditions for the visibility, one of which is as follows.
\begin{lemma}\cite{BGS}\label{lem_visibility}
Let $(M,g)$ be a Hadamard manifold. Then the following statements are equivalent:
\begin{enumerate}[label=(\roman*)]
  \item $(M,g)$ is a visibility manifold.
  \item $b_{v_1}^{-1}((-\infty, c_1))\cap b_{v_2}^{-1}((-\infty,c_2))$ is bounded for all $p\in M$, $v_1\neq v_2\in T_p^1M$, and $c_1,c_2\in\mathbb{R}$.
\end{enumerate}
\end{lemma}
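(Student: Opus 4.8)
The plan is to reduce the stated equivalence to a single geometric dichotomy and then verify each side against it. The key reduction is the observation that the negation of (ii) is equivalent to the existence of a geodesic ray $\rho:[0,\infty)\to M$ along which \emph{both} Busemann functions $b_{v_1}$ and $b_{v_2}$ are bounded above. Indeed, such a ray lies in some set $b_{v_1}^{-1}((-\infty,c_1))\cap b_{v_2}^{-1}((-\infty,c_2))$, which is therefore unbounded; conversely, if one of these intersections is unbounded, its closure is a closed, unbounded, convex subset of the Hadamard manifold $M$ (each horoball is convex since Busemann functions are convex), and a standard fact in nonpositive curvature is that such a set contains a geodesic ray, on which both $b_{v_i}\le c_i$. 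So it suffices to show that $M$ is a visibility manifold if and only if no such ray exists.

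For the implication (i)$\Rightarrow$(ii) I would show that visibility forbids such a ray. Suppose $\rho$ is a ray as above with $\eta:=\rho(\infty)$. Since distinct directions at $p$ determine distinct points at infinity, $\xi_1:=\gamma_{v_1}(\infty)$ and $\xi_2:=\gamma_{v_2}(\infty)$ are distinct, so $\eta$ differs from at least one of them, say $\eta\neq\xi_1$. By Definition \ref{def_visi} there is a geodesic $\beta:\mathbb{R}\to M$ bi-asymptotic to $\eta$ and $\xi_1$, oriented so that $\beta(+\infty)=\eta$, $\beta(-\infty)=\xi_1$. The identity $\nabla b_{v_1}(\gamma(t))=-\gamma'(t)$ recorded above shows that $b_{v_1}$ grows at unit rate along $\beta$ toward $\eta$, i.e.\ $b_{v_1}(\beta(s))=b_{v_1}(\beta(0))+s$. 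Because $\rho$ and the forward ray of $\beta$ share the endpoint $\eta$, they are asymptotic, so $d(\rho(s),\beta(s))\le C$ for a constant $C$; the $1$-Lipschitz property of $b_{v_1}$ then gives $b_{v_1}(\rho(s))\ge b_{v_1}(\beta(s))-C\to+\infty$, contradicting boundedness of $b_{v_1}\circ\rho$.

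For the converse (ii)$\Rightarrow$(i) I would argue contrapositively, producing such a ray when $M$ is not a visibility manifold. Using Proposition \ref{prop_unique}, choose distinct $\xi_1,\xi_2\in\partial_\infty M$ admitting no bi-asymptotic geodesic, fix a basepoint $o$, and let $y_k,z_k$ be the points at distance $k$ from $o$ on the rays to $\xi_1,\xi_2$. Set $\sigma_k=[y_k,z_k]$, let $w_k\in\sigma_k$ be the foot of the perpendicular from $o$, and put $D_k=d(o,w_k)$. First I would show $D_k\to\infty$: otherwise a subsequence of the $\sigma_k$ passes through a fixed ball and, by Arzel\`a--Ascoli, subconverges to a complete geodesic with endpoints $\xi_1,\xi_2$, contradicting the choice of the pair. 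The $\mathrm{CAT}(0)$ comparison at the right angle at $w_k$ gives $d(w_k,y_k)^2\le k^2-D_k^2$, so with $b_{v_1}(y_k)=b_{v_1}(o)-k$ and the $1$-Lipschitz bound, $b_{v_1}(w_k)\le b_{v_1}(o)-k+\sqrt{k^2-D_k^2}\le b_{v_1}(o)$, and symmetrically $b_{v_2}(w_k)\le b_{v_2}(o)$. Passing to a limit of the directions $o\to w_k$ yields a ray $\rho$ with $\rho(s)=\lim_k p_k(s)$, where $p_k(s)$ is the point at distance $s$ from $o$ on $[o,w_k]$; convexity of $b_{v_i}$ along $[o,w_k]$ combined with $b_{v_i}(w_k)\le b_{v_i}(o)$ forces $b_{v_i}(\rho(s))\le b_{v_i}(o)$ for all $s$, which is the desired ray.

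The main obstacle is this last quantitative step: extracting from the mere nonexistence of a connecting geodesic a single ray along which both Busemann functions stay bounded. The delicate point is the uniform upper bound $b_{v_i}(w_k)\le b_{v_i}(o)$ at the escaping feet $w_k$, which rests on the comparison estimate $d(w_k,y_k)^2\le k^2-D_k^2$ together with $D_k\to\infty$; this is precisely what guarantees that the limiting direction $\eta=\rho(\infty)$ avoids both $\xi_1$ and $\xi_2$, since otherwise one of the Busemann functions would grow linearly along $\rho$. By contrast, the convexity propagation to the whole ray and the visibility direction are comparatively routine once the formula $\nabla b_v=-\gamma'$ and the bounded-distance property of asymptotic rays are in hand.
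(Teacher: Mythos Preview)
The paper does not give its own proof of this lemma: it is stated with the citation \cite{BGS} and used as a black box, so there is nothing in the paper to compare your argument against.

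As for your argument itself, it is essentially the standard proof one finds in the Ballmann--Gromov--Schroeder setting, and the outline is sound. The reduction of $\lnot$(ii) to the existence of a ray on which both $b_{v_i}$ are bounded above is correct (closed horoballs are convex, their intersection is closed convex, and an unbounded closed convex subset of a Hadamard manifold contains a ray). For (i)$\Rightarrow$(ii) your use of the bi-asymptotic geodesic $\beta$ together with $\nabla b_{v_1}=-\gamma'$ along rays asymptotic to $\gamma_{v_1}$ and the $1$-Lipschitz property of Busemann functions is fine; just note that the bound $d(\rho(s),\beta(s))\le C$ uses that in a Hadamard manifold the distance between two unit-speed asymptotic rays is a convex, bounded function of $s$, hence nonincreasing. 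For (ii)$\Rightarrow$(i), the key inequality $d(w_k,y_k)^2\le k^2-D_k^2$ indeed follows from the CAT(0) law of cosines because the Alexandrov angle at the foot $w_k$ is at least $\pi/2$, and your convexity argument then propagates the bound $b_{v_i}(w_k)\le b_{v_i}(o)$ to the limiting ray. The step $D_k\to\infty$ via Arzel\`a--Ascoli is the standard compactness argument; you should say explicitly that the subsequential limit of the segments $\sigma_k$ is a complete geodesic whose two ends are asymptotic to $\xi_1,\xi_2$ (using that $y_k\to\xi_1$, $z_k\to\xi_2$ in the cone topology), which is exactly what the non-visibility hypothesis forbids.
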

For example, if a Hadamard manifold $(M,g)$ satisfies the curvature condition $K\leq -a^2<0$, for some $a\in\mathbb{R}$, then $(M,g)$ is a visibility manifold \cite{BGS}.
In a Hadamard manifold, bi-asymptotic geodesics are normal geodesics of some intersection of horospheres of the form $b_v^{-1}(0)\cap b_{-v}^{-1}(0)$:
\begin{proposition}\cite{Esc}\label{prop_esc_from_ref}
Let $(M,g)$ be a Hadamard manifold. Then, for every $v\in T^1M$,
$b_v^{-1}(0)\cap b_{-v}^{-1}(0)$ is connected,
\begin{equation*}
\nabla b_v(x)+\nabla b_{-v}(x)=0,
\end{equation*}
for all $x\in b_v^{-1}(0)\cap b_{-v}^{-1}(0)$, and the geodesics which is asymptotic to $\gamma_v$ and intersects $b_v^{-1}(0)\cap b_{-v}^{-1}(0)$ orthogonally at a point are bi-asymptotic to $v,-v$.
\end{proposition}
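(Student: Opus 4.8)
The plan is to analyze the auxiliary function $f := b_v + b_{-v}$ and to extract all three assertions from its convexity together with the identity $\nabla b_v(\gamma(t)) = -\gamma'(t)$ recorded above. First I would establish that $f \ge 0$ and that $f \equiv 0$ on $Z := b_v^{-1}(0)\cap b_{-v}^{-1}(0)$. Since $\gamma_v$ and $\gamma_{-v}$ are the two rays of a single geodesic through $p$, one has $d(\gamma_v(t),\gamma_{-v}(s)) = t+s$, so the triangle inequality gives $(d(x,\gamma_v(t))-t)+(d(x,\gamma_{-v}(s))-s)\ge 0$; letting $t,s\to\infty$ yields $f(x)=b_v(x)+b_{-v}(x)\ge 0$ for every $x$. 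On $Z$ both summands vanish, so $f\equiv 0$ there, and $p\in Z$ (because $b_v(p)=b_{-v}(p)=0$), so $Z$ is nonempty. As a sum of convex functions $f$ is convex, and since its infimum $0$ is attained, the set $N:=f^{-1}(0)$ is exactly the set of global minimizers of $f$ and contains $Z$.

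Next I would read off the gradient identity. Because $f$ is $C^1$ (Busemann functions being $C^2$) and $N$ is its global-minimum set, the differential of $f$ vanishes at every point of $N$, that is $\nabla b_v(x)+\nabla b_{-v}(x)=0$ for all $x\in N\supseteq Z$. This is the second assertion. It also shows that along $Z$ the two horospheres share the same normal line, so $\nabla b_v$ is orthogonal to $Z$; hence the orthogonality hypothesis in the third assertion is automatic for any geodesic asymptotic to $\gamma_v$ that meets $Z$.

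For the bi-asymptotic claim, fix $x\in N$ and let $\gamma$ be the geodesic with $\gamma(0)=x$ and $\gamma'(0)=-\nabla b_v(x)$; by the recorded formula this is the unique ray from $x$ asymptotic to $\gamma_v$ (Proposition \ref{prop_unique}). The reversed ray $t\mapsto\gamma(-t)$ has initial velocity $\nabla b_v(x)=-\nabla b_{-v}(x)$, which is precisely the initial velocity of the unique ray from $x$ asymptotic to $\gamma_{-v}$; by uniqueness the two rays coincide, so $\gamma$ is bi-asymptotic to $v,-v$, giving the third assertion. Differentiating along $\gamma$ gives $\tfrac{d}{dt}f(\gamma(t))=\langle\nabla b_v+\nabla b_{-v},\gamma'\rangle=0$ by the gradient identity, so each such geodesic lies entirely in $N$, and $N$ is a union of these bi-asymptotic geodesics.

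Finally, connectedness. The minimizer set $N$ of a convex function on a Hadamard manifold is geodesically convex, hence path-connected. To pass to $Z$ I would use the negative gradient flow $\phi_t$ of $b_v$: it is complete (the field $-\nabla b_v$ is $C^1$ of unit length on a complete manifold), its orbits are the bi-asymptotic geodesics above, it preserves $N$, and $b_v(\phi_t(x))=b_v(x)-t$ decreases at unit rate. Thus $r:N\to Z$, $r(x):=\phi_{b_v(x)}(x)$, is a well-defined continuous retraction onto $Z$, whence $Z=r(N)$ is connected as the continuous image of a connected set. The main obstacle is this last organizational step, namely verifying that the flow keeps $N$ invariant and realizes $N$ as $Z\times\mathbb{R}$, while the conceptual crux on which everything rests is the nonnegativity and convexity of $f$.
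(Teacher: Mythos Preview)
The paper does not supply its own proof of this proposition; it is quoted from Eschenburg \cite{Esc} and stated without argument. Your reconstruction is correct and follows the classical route: analyze $f=b_v+b_{-v}$, use the triangle inequality to get $f\ge 0$, use convexity of Busemann functions to conclude that the zero set $N=f^{-1}(0)$ is totally geodesic (hence connected) and that $\nabla f$ vanishes there, and then use the identity $\nabla b_v(\gamma(t))=-\gamma'(t)$ along asymptotic rays together with Proposition~\ref{prop_unique} to obtain bi-asymptoticity and the retraction $N\to Z$.

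One point worth making explicit for the reader: in the step ``$\tfrac{d}{dt}f(\gamma(t))=\langle\nabla b_v+\nabla b_{-v},\gamma'\rangle=0$ by the gradient identity'' you are not invoking the vanishing of $\nabla f$ on $N$ (which at that stage is only known at $\gamma(0)$), but rather the two separate identities $\nabla b_v(\gamma(t))=-\gamma'(t)$ and $\nabla b_{-v}(\gamma(t))=\gamma'(t)$, valid along the whole line because you have already shown $\gamma$ is bi-asymptotic. Spelling this out removes any appearance of circularity. With that clarification the argument is complete and matches the standard proof in the literature.
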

Consequently, for two distinct bi-asymptotic geodesics, there exists a 2-dimensional flat, totally geodesic embedding containing them:
\begin{theorem}\cite{Esc}\label{thm_esc_from_ref}
Let $(M,g)$ be a Hadamard manifold. Then, for all $v\in T^1M$,
the Busemann function $b_v$ is convex, and the set $b_v^{-1}(0)\cap b_{-v}^{-1}(0)$ is convex.
If two geodesics $\gamma_1,\gamma_2$ are bi-asymptotic, there exists $a>0$ and a totally geodesic, isometric embedding
$F:[0,a]\times\mathbb{R}\rightarrow M$ such that $\gamma_1=\left.F\right|_{\{0\}\times\mathbb{R}}$ and $\gamma_2=\left.F\right|_{\{a\}\times\mathbb{R}}$.
\end{theorem}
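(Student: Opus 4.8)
The plan is to treat the three assertions in turn, the flat strip being the substantial one. Throughout I would use the standard fact that on a Hadamard manifold the distance function $(x,y)\mapsto d(x,y)$ is convex, the basic consequence of $K\le 0$ via Jacobi field comparison. For the convexity of $b_v$, first observe that each approximating function $b_{v,t}(x):=d(x,\gamma_v(t))-t$ is convex, being a translate of the convex function $d(\cdot,\gamma_v(t))$. The triangle inequality gives $b_{v,t_2}\le b_{v,t_1}$ for $t_1<t_2$, while the reverse triangle inequality gives $b_{v,t}(x)\ge -d(x,\gamma_v(0))$, so $b_{v,t}(x)$ decreases to $b_v(x)$ as $t\to\infty$; a pointwise limit of convex functions is convex, whence $b_v$ (and likewise $b_{-v}$) is convex. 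For the convexity of $Z:=b_v^{-1}(0)\cap b_{-v}^{-1}(0)$, the key inequality is $b_v+b_{-v}\ge 0$, which follows by passing to the limit in $t+s=d(\gamma_v(t),\gamma_v(-s))\le d(x,\gamma_v(t))+d(x,\gamma_v(-s))$ and using $\gamma_{-v}(s)=\gamma_v(-s)$. Then I would show $Z=\{b_v\le 0\}\cap\{b_{-v}\le 0\}$: on the right-hand set $b_v+b_{-v}\le 0$, which together with $b_v+b_{-v}\ge 0$ forces $b_v=b_{-v}=0$. As each sublevel set of a convex function is convex, $Z$ is an intersection of two convex sets, hence convex (consistent with the connectedness in Proposition \ref{prop_esc_from_ref}).

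For the flat strip, given bi-asymptotic $\gamma_1,\gamma_2$ they share the forward endpoint (class $v$) and the backward endpoint (class $-v$). By Proposition \ref{prop_esc_from_ref}, after reparametrizing the origins I may assume $\gamma_i(0)\in Z$ and each $\gamma_i$ meets $Z$ orthogonally, and conversely every geodesic through a point of $Z$ orthogonal to $Z$ and asymptotic to $\gamma_v$ is bi-asymptotic to $v,-v$. Let $\sigma:[0,a]\to Z$ be the unit-speed geodesic in the convex set $Z$ from $\gamma_1(0)$ to $\gamma_2(0)$, with $a=d(\gamma_1(0),\gamma_2(0))$; convexity of $Z$ makes $\sigma$ minimizing in $M$. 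Define $F:[0,a]\times\mathbb{R}\to M$ by $F(u,t)=\gamma_{\sigma(u)}(t)$, the orthogonal bi-asymptotic geodesic through $\sigma(u)$. For any $u,u'$ the function $t\mapsto d(\gamma_{\sigma(u)}(t),\gamma_{\sigma(u')}(t))$ is convex (restriction of the convex distance on $M\times M$ to a geodesic) and bounded (both geodesics are bi-asymptotic, hence within bounded distance over all of $\mathbb{R}$), so it is constant, with value at $t=0$ equal to $d(\sigma(u),\sigma(u'))=|u-u'|$. Thus $d(F(u,t),F(u',t))=|u-u'|$ for all $t,u,u'$, and the additivity $d(F(u_1,t),F(u_3,t))=d(F(u_1,t),F(u_2,t))+d(F(u_2,t),F(u_3,t))$ for $u_1\le u_2\le u_3$ shows that for each fixed $t$ the transversal $u\mapsto F(u,t)$ is a unit-speed minimizing geodesic.

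The flatness then comes from a constant-norm Jacobi field. For fixed $u$ the field $J(t):=\partial_uF(u,t)$ is a Jacobi field along $\gamma_{\sigma(u)}$, and $\|J(t)\|\equiv 1$ because the transversals are unit-speed. Writing $T=\gamma_{\sigma(u)}'$, I would use
\[ \frac{d^2}{dt^2}\|J\|^2=2\|J'\|^2-2\langle R(J,T)T,J\rangle. \]
The left side vanishes and $K\le 0$ gives $\langle R(J,T)T,J\rangle\le 0$, forcing both $J'\equiv 0$ (so $J$ is parallel) and $\langle R(J,T)T,J\rangle\equiv 0$ (so the planes spanned by $J,T$ are flat). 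Since $J(0)=\sigma'(u)$ is tangent to $Z$ while $T(0)\perp Z$ and $J'\equiv 0$, the affine function $\langle J,T\rangle$ vanishes identically, so $J\perp T$. Hence along each $\gamma_{\sigma(u)}$ the frame $\{T,J\}$ is parallel, orthonormal and spans a zero-curvature plane, the pullback metric is $F^*g=dt^2+du^2$, and $F$ is an isometric immersion; injectivity follows from the distance relations above, so $F$ is an embedding. The image is totally geodesic: $\mathrm{II}(T,T)=0$ and $\mathrm{II}(J,J)=0$ since the coordinate curves are geodesics of $M$, while $\mathrm{II}(T,J)=(\nabla_TJ)^\perp=(J')^\perp=0$. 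Relabelling $\gamma_1=F|_{\{0\}\times\mathbb{R}}$ and $\gamma_2=F|_{\{a\}\times\mathbb{R}}$ finishes the proof.

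I expect the delicate points to be, first, justifying that bi-asymptoticity yields a genuine two-sided bound on $d(\gamma_{\sigma(u)}(t),\gamma_{\sigma(u')}(t))$ uniformly over all $t\in\mathbb{R}$, so that the convex width is truly constant rather than merely bounded on a half-line, and second, upgrading ``flat in the induced metric'' to ``totally geodesic.'' The constant-norm Jacobi field is precisely the device that makes the second point clean, reducing total geodesy to the vanishing of $J'$ and of the mixed second fundamental form.
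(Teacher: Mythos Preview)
The paper does not prove this theorem: it is quoted from Eschenburg \cite{Esc} (note the citation in the theorem header) and no proof environment follows it. There is therefore no ``paper's own proof'' to compare your proposal against.

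On its own merits, your argument is essentially the classical proof of the flat strip theorem as in Eschenburg or Ballmann--Gromov--Schroeder. The convexity of $b_v$ via monotone limits of convex approximants and the identification $Z=\{b_v\le 0\}\cap\{b_{-v}\le 0\}$ are correct. One point deserves care: when you write ``after reparametrizing the origins I may assume $\gamma_i(0)\in Z$,'' you are invoking the converse of what Proposition~\ref{prop_esc_from_ref} actually states. The missing step is that a bi-asymptotic geodesic $\gamma_2$ necessarily meets $Z$; this holds because along $\gamma_2$ one has $\nabla b_v=-\gamma_2'$ and $\nabla b_{-v}=\gamma_2'$, so $\nabla(b_v+b_{-v})=0$ on $\gamma_2$, and a convex non-negative function with a critical point attains its minimum $0$ there. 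With that in hand, your constant-width argument (convex bounded function on $\mathbb{R}$ is constant) and the constant-norm Jacobi field computation correctly yield $J'\equiv 0$, flatness of the plane $\operatorname{span}\{T,J\}$, and the vanishing of the second fundamental form, giving the totally geodesic isometric embedding of $[0,a]\times\mathbb{R}$.
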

Let $p\in M$, $v_1\neq v_2\in T_p^1M$, and
\begin{equation*}
D=\{x\in M: \nabla b_{v_1}(x)+\nabla b_{v_2}(x)=0\}.
\end{equation*}
Note that $D$ is closed and $D$ is nonempty if $(M,g)$ is a visibility manifold.
For every point $x\in D$,
by Proposition \ref{prop_unique}, there exists a unique geodesic ray from $x$ which is asymptotic to $v_1$, so it is bi-asymptotic to $v_1,v_2$.
In particular, by Theorem \ref{thm_esc_from_ref}, $D$ is connected.
Suppose that $D$ is nonempty, so $D$ contains a bi-asymptotic geodesic to $v_1,v_2$.
Let $c_0$ be the constant value of $b_{v_1}+b_{v_2}$ on $D$. If $c_1,c_2\in\mathbb{R}$ and $c_1+c_2=c_0$, then
there exists a point $x$ on each bi-asymptotic geodesic such that $b_{v_1}(x)=c_1$ and $b_{v_2}(x)=c_2$, and
\begin{equation}\label{eq_intersection_in_D}
b_{v_1}^{-1}(c_1)\cap b_{v_2}^{-1}(c_2)=b_v^{-1}(0)\cap b_{-v}^{-1}(0)\subseteq D,
\end{equation}
where $v=\nabla b_{v_1}(x)$.

\section{Proof of Theorem \ref{thm_volpre}}

Let $p,q\in M$.
Since $(M,g)$ is a Hadamard manifold, there exists a unit-speed geodesic $\gamma$ such that $\gamma(0)=p$, $\gamma(t_0)=q$ for some $t_0>0$. Set $v=\gamma'(0)$.
Let $S_t=b_v^{-1}(t)$ be the level set of $b_v$ for all $t\in\mathbb{R}$.
Consider a diffeomorphism $\phi:\mathbb{R}\times S_0\rightarrow M$ defined by
\begin{equation*}
\phi_t(x)=\phi(t,x):=\exp_x\left(t \nabla b_v(x)\right),
\end{equation*}
for all $(t,x)\in\mathbb{R}\times S_0$.

For all $w\in T^1M$, we denote the space of orthogonal tangent vectors to $w$ by $w^{\perp}$.
Define $U=U(t):\nabla b_v(\phi_t(x))^{\perp}\rightarrow \nabla b_v(\phi_t(x))^{\perp}$ by
\begin{equation*}
U(w):=\nabla_w \nabla b_v,
\end{equation*}
for all $t\in\mathbb{R}$ and $w\in\nabla b_v(\phi_t(x))^{\perp}$.
Since $(M,g)$ is asymptotically harmonic, $\operatorname{tr}U=-\Delta b_v=h$ for some constant $h\in\mathbb{R}$.
Since $b_v$ is convex by Theorem \ref{thm_esc_from_ref}, $h\geq 0$.

\begin{lemma}\label{lem_measurez} 
For every $t\in\mathbb{R}$,
\begin{equation*}
(\phi_t)^*(d\mu_t)=e^{ht}d\mu_0.
\end{equation*}
\end{lemma}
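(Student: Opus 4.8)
The plan is to compute directly how the induced volume density on the horospheres transforms under the normal gradient flow, using that $\phi_t$ is the flow of the unit vector field $\nabla b_v$ and that $\operatorname{tr}U$ equals the constant $h$.

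First I would fix $x\in S_0$, choose local coordinates $(x^1,\dots,x^{n-1})$ on $S_0$, and transport them to $S_t$ through $\phi_t$, so that $(t,x^1,\dots,x^{n-1})$ become coordinates on $M$ via the diffeomorphism $\phi$. Write $E_i(t)=d\phi_t(\partial_{x^i})$; each $E_i(t)$ is tangent to $S_t$, hence lies in $\nabla b_v(\phi_t(x))^\perp$, and the product structure of $\mathbb{R}\times S_0$ gives $[\partial_t,E_i]=0$. Since the curves $t\mapsto\phi_t(x)=\exp_x\!\big(t\nabla b_v(x)\big)$ are precisely the integral curves of $\nabla b_v$ (the gradient lines of a Busemann function are unit-speed geodesics, and $\|\nabla b_v\|\equiv 1$ forces $b_v(\phi_t(x))=b_v(x)+t$, so $\phi_t(S_0)=S_t$), the field $\partial_t$ equals $\nabla b_v$ on $M$, and torsion-freeness then yields the first-order identity
\[
\nabla_{\partial_t}E_i \;=\; \nabla_{E_i}\nabla b_v \;=\; U(E_i),\qquad \operatorname{tr}U=h .
\]

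Next, set $A(t)=\big(g(E_i(t),E_j(t))\big)_{i,j}$ and $\theta(t)=\sqrt{\det A(t)}$, so that $d\mu_t=\theta(t)\,dx^1\wedge\cdots\wedge dx^{n-1}$ on $S_t$ and, because $\phi_t$ is the identity in these coordinates, $(\phi_t)^*(d\mu_t)=\theta(t)\,dx^1\wedge\cdots\wedge dx^{n-1}$ on $S_0$. Differentiating, $\dot A_{ij}=g(UE_i,E_j)+g(E_i,UE_j)=2\operatorname{Hess}b_v(E_i,E_j)$, and a short computation in the frame $\{E_i\}$ — the matrix of $U$ is conjugate, via $A$, to the symmetric matrix $\big(\operatorname{Hess}b_v(E_i,E_j)\big)_{i,j}$ — gives $\operatorname{tr}\!\big(A^{-1}\dot A\big)=2\operatorname{tr}U=2h$. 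Hence $\theta'(t)=\tfrac12\,\theta(t)\operatorname{tr}(A^{-1}\dot A)=h\,\theta(t)$; since $\phi_0=\mathrm{id}$, we conclude $\theta(t)=e^{ht}\theta(0)$, i.e.\ $(\phi_t)^*(d\mu_t)=e^{ht}d\mu_0$.

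The only genuinely delicate points are (i) the identification $\partial_t=\nabla b_v$ together with $\phi_t(S_0)=S_t$, which rest on the eikonal property $\|\nabla b_v\|\equiv1$ and the $C^2$-regularity of Busemann functions recalled in the introduction, and (ii) the vanishing of the normal–normal component of $\operatorname{Hess}b_v$, so that $\operatorname{tr}U$ is the full divergence $\operatorname{div}(\nabla b_v)$ and equals the constant $h$ furnished by asymptotic harmonicity. As a shortcut one may instead argue globally: $L_{\nabla b_v}(d\mu)=\operatorname{div}(\nabla b_v)\,d\mu=h\,d\mu$ gives $(\phi_t)^*(d\mu)=e^{ht}d\mu$, and combining this with $(\phi_t)^*(db_v)=db_v$ and the splitting $d\mu=db_v\wedge\omega$, where $\omega$ restricts to $d\mu_s$ on each $S_s$ because $\|\nabla b_v\|=1$, and then restricting to $S_0$, reproduces the claim.
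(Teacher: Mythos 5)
Your proposal is correct and takes essentially the same approach as the paper: both differentiate the transported volume density along the normal geodesic flow of $\nabla b_v$, using the identity $\nabla_{\partial_t}E=U(E)$ together with $\operatorname{tr}U=h$ constant to obtain the factor $e^{ht}$ — the paper packages this as the Jacobi-tensor relation $U=A'A^{-1}$ with $\det A(t)=e^{ht}$, while you use the Gram matrix of a transported coordinate frame and Jacobi's formula, which is the same computation in different bookkeeping. Your Lie-derivative shortcut via $L_{\nabla b_v}(d\mu)=h\,d\mu$ and the splitting $d\mu=db_v\wedge\omega$ is also a valid alternative.
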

\begin{proof}
For all $(t,x)\in\mathbb{R}\times S_0$,
\begin{equation*}
\left.(\phi_t)_*\right|_x\left(\frac{\partial}{\partial t}\right)
=\nabla b_v(\phi_t(x)).
\end{equation*}
Then
\begin{equation}\label{eq_phit_hess}
\left.(\phi_t)_*\right|_x(w)=\left.\exp_*\right|_{t\nabla b_v(x)}\left(w,t U(w)\right).
\end{equation}
Set $x\in S_0$ and $w\in\nabla b_v(x)^{\perp}$. Define
\begin{equation*}
\Gamma(s,t):=\exp_{\sigma(s)}(t\nabla b_v(\sigma(s))),
\end{equation*}
for all $s\in (-\varepsilon,\varepsilon)$ and $t\in\mathbb{R}$,
where $\sigma=\sigma(s)$ is the curve in $S_0$ such that $\sigma(0)=x$ and $\sigma'(0)=w$ and $\varepsilon >0$.

We denote $\partial_t\Gamma=\Gamma_*\left(\frac{\partial}{\partial t}\right)$,
$\partial_s\Gamma=\Gamma_*\left(\frac{\partial}{\partial s}\right)$, and
$J(t)=\partial_s\Gamma(0,t)$, for all $t\in\mathbb{R}$.
So, $\left.(\phi_t)_*\right|_x\left(\frac{\partial}{\partial t}\right)=\nabla b_v(\phi_t(x))$ and
$\left.(\phi_t)_*\right|_x(w)=J(t)$, for all $t\in\mathbb{R}$.
Then, from equation \eqref{eq_phit_hess}, we have
\begin{equation*}
U(J(t))=\nabla_{\partial_s\Gamma(0,t)}\partial_t\Gamma=\nabla_{\partial_t\Gamma(0,t)}\partial_s\Gamma=J'(t),
\end{equation*}
for all $t\in\mathbb{R}$. Thus, we get 
\begin{equation*}
U=A'A^{-1},
\end{equation*}
on $\nabla b_v(\phi_t(x))^{\perp}$,
where $t\in\mathbb{R}$, $R_t$ is the Jacobi operator along $t\mapsto\phi_t(x)$, and $A=A(t)$ is the $(1,1)$ tensor field on $\nabla b_v(\phi_t(x))^{\perp}$ such that $A^{\prime\prime}+R_tA=0$.
Since $(M,g)$ is asymptotically harmonic, $\operatorname{tr}U=h$ is constant. So,
\begin{equation*}
(\ln (\det A(t)))'=\operatorname{tr}U(t)=h,
\end{equation*}
and $\det A(t)=e^{ht}$. Hence,
\begin{eqnarray*}
(\phi_t)^*(d\mu_t(\phi_t(x)))&=&
d\mu_t\left(
\left.(\phi_t)_*\right|_x(w_1),\ldots,\left.(\phi_t)_*\right|_x(w_{n-1})
\right)d\mu_0(x) \\
&=& \det A(t) \: d\mu_0(x)=e^{ht}d\mu_0(x),
\end{eqnarray*}
where $w_1,\ldots,w_{n-1}$ are orthonormal tangent vectors to $S_0$ at $x$.
\end{proof}

Define $F:M\rightarrow M$ by
\begin{equation*}
F(\phi(t,x)):=\phi(\alpha(t),x),
\end{equation*}
for all $(t,x)\in\mathbb{R}\times S_0$, where $\alpha=\alpha(t)$ is a smooth function on $\mathbb{R}$ such that $\alpha(0)=t_0$.

\begin{corollary}\label{cor_F_measurez}
For every $t\in\mathbb{R}$,
\begin{equation*}
F^*(d\mu_{\alpha(t)})=e^{h\alpha(t)-ht}d\mu_t.
\end{equation*}
\end{corollary}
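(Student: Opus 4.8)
The plan is to deduce the corollary directly from Lemma~\ref{lem_measurez} by factoring $F$ through the maps $\phi_t$, rather than redoing the Jacobian computation. The first step is to record the compatibility between $F$ and the $\phi_t$'s. Since $b_v(\phi_s(x))=b_v(x)+s$, the map $\phi_t|_{S_0}$ is a diffeomorphism of $S_0$ onto $S_t=b_v^{-1}(t)$, and from the definition $F(\phi(t,x))=\phi(\alpha(t),x)$ we get, for each fixed $t$, that $F$ restricts to a map $S_t\to S_{\alpha(t)}$ together with the identity of maps $S_0\to S_{\alpha(t)}$
\[
F\circ(\phi_t|_{S_0})=\phi_{\alpha(t)}|_{S_0}.
\]
In particular $F|_{S_t}=(\phi_{\alpha(t)}|_{S_0})\circ(\phi_t|_{S_0})^{-1}$ is a diffeomorphism of $S_t$ onto $S_{\alpha(t)}$ regardless of the choice of $\alpha$, so the pullback $F^*(d\mu_{\alpha(t)})$ is a well-defined measure on $S_t$ and the statement makes sense.

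Next I would pull back $d\mu_{\alpha(t)}$ first by $F$ and then by $\phi_t|_{S_0}$. Using functoriality of pullback and the compatibility identity,
\[
(\phi_t|_{S_0})^*\bigl(F^*(d\mu_{\alpha(t)})\bigr)=(F\circ\phi_t|_{S_0})^*(d\mu_{\alpha(t)})=(\phi_{\alpha(t)}|_{S_0})^*(d\mu_{\alpha(t)})=e^{h\alpha(t)}\,d\mu_0,
\]
where the last equality is Lemma~\ref{lem_measurez} applied with the parameter $\alpha(t)$ in place of $t$. Applying Lemma~\ref{lem_measurez} again, this time at parameter $t$, gives $(\phi_t|_{S_0})^*(d\mu_t)=e^{ht}\,d\mu_0$, i.e.\ $d\mu_0=e^{-ht}(\phi_t|_{S_0})^*(d\mu_t)$. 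Substituting and noting that $e^{h\alpha(t)-ht}$ is a constant for each fixed $t$,
\[
(\phi_t|_{S_0})^*\bigl(F^*(d\mu_{\alpha(t)})\bigr)=e^{h\alpha(t)-ht}(\phi_t|_{S_0})^*(d\mu_t)=(\phi_t|_{S_0})^*\bigl(e^{h\alpha(t)-ht}\,d\mu_t\bigr).
\]
Since $\phi_t|_{S_0}$ is a diffeomorphism, its pullback is a bijection on measures, so cancelling it yields $F^*(d\mu_{\alpha(t)})=e^{h\alpha(t)-ht}\,d\mu_t$, as claimed.

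I do not expect a genuine obstacle: the corollary is a formal consequence of Lemma~\ref{lem_measurez}. The only point requiring care is the bookkeeping of domains and codomains — keeping track that $d\mu_t$ lives on $S_t$ while $d\mu_{\alpha(t)}$ lives on $S_{\alpha(t)}$, that $F$ indeed carries $S_t$ into $S_{\alpha(t)}$, and that the factor $e^{h\alpha(t)-ht}$ depends only on $t$ and not on the point of $S_t$, so it may be pulled in and out of the pullback as a scalar.
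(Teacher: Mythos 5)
Your proof is correct, and it is a genuinely different (if closely related) derivation from the one in the paper. The paper's proof is a one-liner of a different flavor: it observes that for $x\in S_t$ one has $F(x)=\exp_x\bigl((\alpha(t)-t)\nabla b_v(x)\bigr)$, i.e.\ $F|_{S_t}$ is itself the unit-normal displacement of Lemma \ref{lem_measurez}, now based at the level set $S_t$ and run for time $\alpha(t)-t$, and then says ``similarly to Lemma \ref{lem_measurez}'' to get the factor $e^{h(\alpha(t)-t)}$ — implicitly re-running the Jacobi field/Riccati determinant computation with $S_0$ replaced by $S_t$, which works because $\operatorname{tr}U=h$ is constant on all of $M$. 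You instead factor $F|_{S_t}=(\phi_{\alpha(t)}|_{S_0})\circ(\phi_t|_{S_0})^{-1}$ and deduce the identity purely formally from the statement of the lemma applied at the two parameters $t$ and $\alpha(t)$, using functoriality of pullback and the fact that pullback by the diffeomorphism $\phi_t|_{S_0}$ can be cancelled. What your route buys is that it needs only the lemma as literally stated (base level $S_0$), with no appeal to the unstated fact that the lemma's argument is insensitive to the choice of base hypersurface; what the paper's route buys is brevity and keeping the geometric mechanism in view. Your bookkeeping — that $F$ maps $S_t$ diffeomorphically onto $S_{\alpha(t)}$, that $\phi_t|_{S_0}:S_0\to S_t$ is a diffeomorphism because $b_v(\phi_s(x))=b_v(x)+s$, and that $e^{h\alpha(t)-ht}$ is constant on each $S_t$ so it passes through the pullback — is exactly the care the formal argument requires, and it is handled correctly.
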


\begin{proof}
For all points $x\in S_t$,
\begin{equation*}
F(x)=\exp_x((\alpha(t)-t)\nabla b_v(x)),
\end{equation*}
So, similarly to Lemma \ref{lem_measurez}, Corollary \ref{cor_F_measurez} holds.
\end{proof}
Therefore, by Proposition \ref{prop_cha} and Corollary \ref{cor_F_measurez},
\begin{eqnarray*}
\int_Mf(x)d\mu(x)&=&\int_{\mathbb{R}}\int_{S_s}f(x)d\mu_s(x)ds\\
&=&\int_{\mathbb{R}}\int_{F^{-1}{(S_s)}}f(F(x))F^*(d\mu_s)(x)ds\\
&=&\int_{\mathbb{R}}\int_{S_t}f(F(x))\alpha'(t)F^*(d\mu_{\alpha(t)})(x)dt\\
&=&\int_{\mathbb{R}}\int_{S_t}f(F(x))\alpha'(t)e^{h\alpha(t)-ht}d\mu_t(x)dt\\
&=&\int_{\mathbb{R}}\int_{S_t}f(F(x))d\mu_t(x)dt\\
&=&\int_Mf(F(x))d\mu(x),
\end{eqnarray*}
if $\alpha$ is strictly monotonic and  
\begin{equation}\label{eq_alphaz}
\alpha'(t)e^{h\alpha(t)-ht}=1
\end{equation}
for all $t\in\mathbb{R}$.
Consider two cases. First assume that $h=0$. Then, equation \eqref{eq_alphaz} holds if $\alpha(t)=t+t_0$.
Thus $F=\phi_{t_0}$.
Now assume that $h>0$. Then
equation \eqref{eq_alphaz} holds if and only if
\begin{equation*}
\alpha'(t)e^{h\alpha(t)}=e^{ht},
\end{equation*}
or, equivalently,
\begin{equation*}
\frac{e^{h\alpha(t)}-e^{ht_0}}{h}=\frac{e^{ht}-1}{h},
\end{equation*}
so
\begin{equation*}
\alpha(t)=\frac{1}{h}\ln \left(e^{ht}+e^{ht_0}-1\right).
\end{equation*}
Therefore,
\begin{equation*}
F(\phi(t,x))=\phi\left(\frac{1}{h}\ln\left(e^{ht}+e^{ht_0}-1\right),x\right).
\end{equation*}
\begin{flushright}\qedsymbol\end{flushright}

\begin{remark}
If $h=0$, then $F=\phi_{t_0}$ is just a translation in $\mathbb{R}^n$, so it is an isometry. On the other hand, if $h>0$, then
\begin{equation*}
F_*(\nabla b_v(\phi_t(x)))=\alpha'(t)\nabla b_v(\phi_{\alpha(t)}(x)),
\end{equation*}
so $F$ is not an isometry since
\begin{equation*}
\alpha'(t)=\frac{he^{ht}}{h(e^{ht}+e^{ht_0}-1)}=\frac{e^{ht}}{e^{ht}+e^{ht_0}-1}<1.
\end{equation*}
Also, $\alpha(t)-t$ is a strictly monotone decreasing function on $\mathbb{R}$, so $F$ is well-defined.

\end{remark}


\section{{{Proof of Theorem \ref{thm_volz_bound}}}}


For asymptotically harmonic manifolds, we can find a volume preserving mapping from two Busemann functions directly.

\begin{lemma}
Let $(M,g)$ be an asymptotically harmonic manifold, $p\in M$, and $v_1\neq v_2\in T_p^1M$.
Then the vector field $X=\nabla b_{v_1}-\nabla b_{v_2}$ is volume preserving.
\end{lemma}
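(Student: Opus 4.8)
The plan is to reduce everything to the identity $L_X(d\mu) = (\operatorname{div} X)\, d\mu$ and to show that $\operatorname{div} X = 0$. Thus the entire content of the lemma is the computation of a divergence.

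First I would recall that Busemann functions on a Hadamard manifold are $C^2$ (by [HI]), so $\nabla b_{v_1}$ and $\nabla b_{v_2}$ are genuine $C^1$ vector fields and $X = \nabla b_{v_1} - \nabla b_{v_2}$ is a well-defined $C^1$ vector field. For any $C^2$ function $\varphi$ one has $\operatorname{div}(\nabla\varphi) = \Delta\varphi$, hence by linearity $\operatorname{div} X = \Delta b_{v_1} - \Delta b_{v_2}$.

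Next I would invoke the assumption that $(M,g)$ is asymptotically harmonic. As already used in Section 3, for a unit vector $v$ the operator $U(w) = \nabla_w\nabla b_v$ on $\nabla b_v^{\perp}$ is the shape operator of the horosphere through the relevant point, and $\operatorname{tr} U = -\Delta b_v$; asymptotic harmonicity says precisely that this trace is one and the same constant $h$ for every horosphere, i.e. $\Delta b_v = -h$ independently of the direction $v$ and of the point. Applying this to $v = v_1$ and $v = v_2$ gives $\operatorname{div} X = (-h) - (-h) = 0$, and therefore $L_X(d\mu) = 0$, which is the definition of $X$ being volume preserving.

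There is essentially no obstacle here: the only step that needs a moment's care is the claim that the constant $h$ is independent of the direction, which is exactly the defining property of an asymptotically harmonic manifold (all horospheres have the same constant mean curvature), so it is a matter of correctly quoting the definition rather than a real difficulty. The deeper point — that the flow of $X$ is defined for all $t \in \mathbb{R}$, so that Proposition \ref{prop_volprez} applies to yield honest volume preserving diffeomorphisms — is a separate issue that I would expect to be handled afterward, presumably using the visibility hypothesis and the structure of the set $D$ together with equation \eqref{eq_intersection_in_D}.
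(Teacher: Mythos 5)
Your proposal is correct and follows essentially the same route as the paper: compute $\operatorname{div}X=\pm(\Delta b_{v_1}-\Delta b_{v_2})$, note that asymptotic harmonicity forces both Laplacians to equal the same constant, and conclude $L_X(d\mu)=(\operatorname{div}X)\,d\mu=0$. The remark that completeness of the flow is a separate matter is also consistent with the paper, which only proves the infinitesimal statement here.
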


\begin{proof}
Note that $X$ is non-vanishing since $v_1\neq v_2$ and
\begin{equation*}
\operatorname{div}X=-\Delta b_{v_1}+\Delta b_{v_2}.
\end{equation*}
In particular, since $(M,g)$ is asymptotically harmonic, $L_X(d\mu)=(\operatorname{div}X)d\mu=0$.
\end{proof}

Consequently, this vector field derives a flow such that it preserves $\nabla b_{v_1}$ and $\nabla b_{v_2}$, and the flow is not volume preserving in general.

\begin{lemma}\label{lem_btbt}
Let $(M,g)$ be an asymptotically harmonic manifold, $p\in M$, $v_1\neq v_2\in T_p^1M$, and
\begin{equation*}
X=\frac{1}{\|\nabla b_{v_1}-\nabla b_{v_2}\|^2}(\nabla b_{v_1}-\nabla b_{v_2}).
\end{equation*}
Then the flow $\phi_t$ of $X$ satisfies
\begin{equation*}
(\phi_t)_*(\nabla b_{v_1}(x))=\nabla b_{v_1}(\phi_t(x)), \:\:\: (\phi_t)_*(\nabla b_{v_2}(x))=\nabla b_{v_2}(\phi_t(x)),
\end{equation*}
for all $t\in\mathbb{R}$ and $x\in M$.
\end{lemma}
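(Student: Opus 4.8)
The plan is to show that the flow of $X$ preserves each of the vector fields $\nabla b_{v_1}$ and $\nabla b_{v_2}$ by checking that the Lie bracket $[X,\nabla b_{v_i}]$ vanishes, since $(\phi_t)_*(\nabla b_{v_i}(x)) = \nabla b_{v_i}(\phi_t(x))$ for all $t$ holds if and only if $\mathcal{L}_X(\nabla b_{v_i}) = [X,\nabla b_{v_i}] = 0$. Writing $f = \|\nabla b_{v_1} - \nabla b_{v_2}\|^{-2}$ and $Y = \nabla b_{v_1} - \nabla b_{v_2}$, so that $X = fY$, I would first expand $[fY, \nabla b_{v_i}] = f[Y,\nabla b_{v_i}] - (\nabla b_{v_i} f)Y$. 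This reduces the problem to two computations: showing $[Y,\nabla b_{v_i}]$ is a multiple of $Y$, and identifying that multiple so that the scalar terms cancel.

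The first key step is to compute $[Y, \nabla b_{v_i}] = \nabla_Y \nabla b_{v_i} - \nabla_{\nabla b_{v_i}} Y$, using that the Levi-Civita connection is torsion-free and that $\nabla b_{v_j}$ is a gradient field with symmetric Hessian $\mathrm{Hess}\, b_{v_j}$. The crucial geometric input is that $b_{v_j}$ is a Busemann function on a Hadamard manifold, so $|\nabla b_{v_j}| \equiv 1$; differentiating $\langle \nabla b_{v_j}, \nabla b_{v_j}\rangle = 1$ gives $\mathrm{Hess}\,b_{v_j}(\nabla b_{v_j}, \cdot) = 0$, i.e. the Hessian annihilates its own gradient direction and the integral curves of $\nabla b_{v_j}$ are geodesics. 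Using this, I would rewrite $\nabla_{\nabla b_{v_i}}\nabla b_{v_j} = \mathrm{Hess}\,b_{v_j}(\nabla b_{v_i})$ and, by symmetry of the Hessian, recognize that $\langle \nabla_{\nabla b_{v_i}}\nabla b_{v_j}, \nabla b_{v_k}\rangle = \langle \nabla_{\nabla b_{v_k}}\nabla b_{v_j}, \nabla b_{v_i}\rangle$; a careful bookkeeping of these symmetric bilinear forms over the indices $\{1,2\}$ should show that all the Hessian contributions organize into a term proportional to $Y$ plus terms that cancel.

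Next I would handle the scalar factor. Since $Y \cdot f = -f^2\, Y\langle Y,Y\rangle = -2f^2\langle \nabla_Y Y, Y\rangle$ and similar expressions govern $\nabla b_{v_i} f$, I expect that the derivative $\nabla b_{v_i} f$ is exactly the coefficient needed to cancel the $Y$-component coming from $f[Y,\nabla b_{v_i}]$. The cleanest route may be to observe that $X$ is the reparametrized integral of $Y$ arranged so that $X$ moves points from one level pair $(b_{v_1}, b_{v_2}) = (c_1, c_2)$ to another with $b_{v_1} - b_{v_2}$ unchanged but $b_{v_1} + b_{v_2}$ advancing at unit rate; concretely, $X(b_{v_1}) = X(b_{v_2})$ can be checked directly from $\langle \nabla b_{v_1} - \nabla b_{v_2}, \nabla b_{v_i}\rangle$ and $\|\nabla b_{v_1} - \nabla b_{v_2}\|^2 = 2 - 2\langle \nabla b_{v_1}, \nabla b_{v_2}\rangle$, giving $X(b_{v_1}) = X(b_{v_2}) = \tfrac12$. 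Hence along a flow line $b_{v_i}(\phi_t(x)) = b_{v_i}(x) + t/2$, and differentiating this in the transverse directions should pin down that the differential $(\phi_t)_*$ carries $\nabla b_{v_i}(x)$ to $\nabla b_{v_i}(\phi_t(x))$.

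The main obstacle I anticipate is the index bookkeeping in the bracket computation: one must keep track of four a priori distinct Hessian contractions $\mathrm{Hess}\,b_{v_i}(\nabla b_{v_j}, \nabla b_{v_k})$ and use symmetry of each Hessian together with the null-direction identity $\mathrm{Hess}\,b_{v_j}(\nabla b_{v_j},\cdot)=0$ to show they collapse to a scalar multiple of $Y$. Asymptotic harmonicity itself ($\mathrm{tr}\,\mathrm{Hess}\,b_{v_j} = -h$ constant) is not needed for this lemma — only the unit-gradient property of Busemann functions on a Hadamard manifold — so I would be careful to phrase the argument using only that. Once the bracket is shown to vanish, the conclusion that $(\phi_t)_*$ intertwines the vector fields is the standard naturality of flows under pushforward, $\phi_t^* \nabla b_{v_i} = \nabla b_{v_i} \iff \mathcal{L}_X \nabla b_{v_i} = 0$.
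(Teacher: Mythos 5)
Your fallback route (the third paragraph of your sketch) is essentially the paper's proof, but you have the signs wrong. The paper simply computes $g(X,\nabla b_{v_1}-\nabla b_{v_2})=1$ and $g(X,\nabla b_{v_1}+\nabla b_{v_2})=0$, i.e. $X[b_{v_1}]=\tfrac12$ and $X[b_{v_2}]=-\tfrac12$, so that $b_{v_1}(\phi_t(x))=b_{v_1}(x)+t/2$ and $b_{v_2}(\phi_t(x))=b_{v_2}(x)-t/2$ (equation \eqref{eq_phi_inter}), and then reads off the stated invariance of the gradients. Your claim $X(b_{v_1})=X(b_{v_2})=\tfrac12$ is incorrect: $X(b_{v_2})=\frac{g(\nabla b_{v_1}-\nabla b_{v_2},\nabla b_{v_2})}{2-2\beta}=-\tfrac12$, so along $\phi_t$ it is the sum $b_{v_1}+b_{v_2}$ that is preserved while the difference advances at unit rate; the behaviour you describe (difference fixed, sum advancing) belongs to the companion field $Y=\frac{1}{2+2\beta}(\nabla b_{v_1}+\nabla b_{v_2})$ of Proposition \ref{prop_lower}, not to $X$. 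Note also that, just as in the paper's ``in particular'' step, passing from the level shifts (equivalently $(\phi_t)^*db_{v_i}=db_{v_i}$) to the statement about gradients is exactly the content of the conclusion, so if you follow this route you must say how the gradient identity is extracted from it rather than leave it as ``differentiating in transverse directions''.

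The genuine gap is in the route you present as primary. Writing $U_i(w):=\nabla_w\nabla b_{v_i}$ and using $U_i\nabla b_{v_i}=0$ (the null-direction identity you correctly isolate), the expansion you propose gives
\begin{equation*}
[X,\nabla b_{v_1}]=\frac{1}{2-2\beta}\left(U_2\nabla b_{v_1}-U_1\nabla b_{v_2}\right)-\frac{2\,U_2(\nabla b_{v_1},\nabla b_{v_1})}{(2-2\beta)^2}\left(\nabla b_{v_1}-\nabla b_{v_2}\right),
\end{equation*}
and pairing the right-hand side with $\nabla b_{v_2}$ yields precisely $\frac{U_2(\nabla b_{v_1},\nabla b_{v_1})-U_1(\nabla b_{v_2},\nabla b_{v_2})}{2-2\beta}$, which is $X[\beta]$ as in equation \eqref{eq_X_diff_beta}. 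Hence $[X,\nabla b_{v_1}]=0$ would force $U_2(\nabla b_{v_1},\nabla b_{v_1})=U_1(\nabla b_{v_2},\nabla b_{v_2})$ pointwise, and in addition the component of $U_2\nabla b_{v_1}-U_1\nabla b_{v_2}$ orthogonal to the span of the two gradients would have to vanish. Neither follows from $\|\nabla b_{v_i}\|\equiv1$ alone (the only input you allow yourself), nor from asymptotic harmonicity in any evident way, and the paper never establishes such identities; on the contrary, the limiting argument for $\frac{\partial}{\partial t}W=0$ in the proof of Theorem \ref{thm_intersectionz} exists precisely because $X[\beta]$ is not known to vanish pointwise. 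So the ``careful bookkeeping'' you defer is not a routine verification but the place where this approach fails; drop the bracket computation and argue from the corrected level-shift identities instead.
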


\begin{proof}
We denote $\beta=g(\nabla b_{v_1},\nabla b_{v_2})$ and set
\begin{equation}\label{eq_Xz_def}
X:=\frac{1}{\|\nabla b_{v_1}-\nabla b_{v_2}\|^2}(\nabla b_{v_1}-\nabla b_{v_2})=\frac{1}{2-2\beta}(\nabla b_{v_1}-\nabla b_{v_2}).
\end{equation}
Then, since $g(X,\nabla b_{v_1}-\nabla b_{v_2})=1$ and $g(X,\nabla b_{v_1}+\nabla b_{v_2})=0$,
\begin{equation}\label{eq_phi_inter}
b_{v_1}(\phi_t(x))=b_{v_1}(x)+\frac{t}{2}, \:\:\: b_{v_2}(\phi_t(x))=b_{v_2}(x)-\frac{t}{2},
\end{equation}
for all $t\in\mathbb{R}$ and $x\in M$, where $\phi_t$ is the flow of $X$. In particular,
\begin{equation*}
(\phi_t)_*(\nabla b_{v_1}(x))=\nabla b_{v_1}(\phi_t(x)), \:\:\: (\phi_t)_*(\nabla b_{v_2}(x))=\nabla b_{v_2}(\phi_t(x)),
\end{equation*}
for all $t\in\mathbb{R}$ and $x\in M$.
\end{proof}


The flow in Lemma \ref{lem_btbt} maps one intersection of two horospheres onto another intersection of two horospheres
and the intersection of two horospheres is orthogonal to each gradient of the Busemann function.
Similarly, we consider the flow of $\frac{1}{2+2\beta}(\nabla b_{v_1}+\nabla b_{v_2})$: 

\begin{proposition}\label{prop_lower}
Let $(M,g)$ be an asymptotically harmonic, visibility manifold,
$p\in M$, and $v_1\neq v_2\in T_p^1M$.
Then there exists a constant $c_0\in\mathbb{R}$ such that, for every $x\in M$,
\begin{equation*}
b_{v_1}(x)+b_{v_2}(x)\geq c_0.
\end{equation*}
In addition, if $b_{v_1}(x)+b_{v_2}(x)=c_0$, then $\nabla b_{v_1}(x)+\nabla b_{v_2}(x)=0$, for any $x\in M$.
\end{proposition}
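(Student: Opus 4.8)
The plan is to study the single function $f:=b_{v_1}+b_{v_2}$. It is $C^{2}$, being a sum of Busemann functions; it is convex, since each $b_{v_i}$ is convex by Theorem~\ref{thm_esc_from_ref}; and its gradient is $\nabla f=\nabla b_{v_1}+\nabla b_{v_2}$, so the set $D$ introduced in Section~2 is exactly the critical set of $f$. The statement then amounts to the assertions that $f$ has a global minimum, that this minimum value is the constant $c_{0}$ that $b_{v_1}+b_{v_2}$ takes on $D$, and that $f$ attains it precisely on $D$. The engine of the argument is the elementary fact that a critical point of a convex function on a Hadamard manifold is automatically a global minimum, so it suffices to exhibit one critical point.

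The first step, and the only place the visibility hypothesis enters, is to produce a point of $D$. By Definition~\ref{def_visi} there is a geodesic $\gamma:\mathbb{R}\to M$ bi-asymptotic to $v_1,v_2$. Since the gradient of a Busemann function at a point is minus the velocity of the asymptotic ray through that point, for each $s$ the shifted rays $t\mapsto\gamma(s+t)$ and $t\mapsto\gamma(s-t)$ give $\nabla b_{v_1}(\gamma(s))=-\gamma'(s)$ and $\nabla b_{v_2}(\gamma(s))=\gamma'(s)$, hence $\gamma(s)\in D$; in particular $D\neq\emptyset$. As noted in Section~2, $D$ is connected (Theorem~\ref{thm_esc_from_ref}) and $\nabla f\equiv 0$ on it, so $f$ is constant on $D$, equal to some $c_{0}\in\mathbb{R}$.

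The second step is a one-line convexity estimate. Fix $x_{0}\in D$ and an arbitrary $x\in M$, and let $\sigma:[0,1]\to M$ be the unique geodesic with $\sigma(0)=x_{0}$ and $\sigma(1)=x$. Then $u(t):=f(\sigma(t))$ is convex with $u'(0)=g(\nabla f(x_{0}),\sigma'(0))=0$, so $u'$ is nondecreasing with $u'(0)=0$, hence $u'\ge 0$ on $[0,1]$ and $u$ is nondecreasing; therefore $f(x)=u(1)\ge u(0)=c_{0}$, which is the desired inequality. For the equality clause, if $f(x)=c_{0}$ then $x$ realizes the global minimum of the $C^{1}$ function $f$, so $\nabla f(x)=0$, i.e.\ $\nabla b_{v_1}(x)+\nabla b_{v_2}(x)=0$; equivalently $\{f=c_{0}\}=D$.

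I do not expect a genuine obstacle here: the substance is the already-recorded fact that visibility forces $D\neq\emptyset$, after which the conclusion is immediate from convexity. The only points that need a little care are pinning down the sign conventions so that the bi-asymptotic geodesic really lies in $D$, and observing that $C^{1}$ regularity of $f$ already suffices for ``global minimum $\Rightarrow$ critical point'', so the finer regularity of Busemann functions plays no role in this proposition.
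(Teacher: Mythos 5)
Your proof is correct, but it follows a genuinely different and more elementary route than the paper. The paper works with the normalized vector field $Y=\frac{1}{\|\nabla b_{v_1}+\nabla b_{v_2}\|^{2}}(\nabla b_{v_1}+\nabla b_{v_2})$ and its flow $\psi_s$: it shows $Y[\beta]\geq 0$ (with $\beta=g(\nabla b_{v_1},\nabla b_{v_2})$) using positive semi-definiteness of the Hessians, assumes $b_{v_1}(x)+b_{v_2}(x)=c<c_0$, uses the compactness supplied by Lemma~\ref{lem_visibility} to extract a limit point $x_0$ of $\psi_{s_i}(x)$ with $(b_{v_1}+b_{v_2})(x_0)=c_0$, identifies $x_0\in D$ via \eqref{eq_intersection_in_D}, and derives the contradiction $\beta(x)=-1$. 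You instead treat $f=b_{v_1}+b_{v_2}$ as a single convex $C^1$ function whose critical set is exactly $D$, use visibility only to guarantee $D\neq\emptyset$ (via a bi-asymptotic geodesic, with the sign check $\nabla b_{v_1}(\gamma(s))=-\gamma'(s)$, $\nabla b_{v_2}(\gamma(s))=\gamma'(s)$ correctly carried out), and then invoke the standard fact that a critical point of a geodesically convex function on a Hadamard manifold is a global minimum; the equality clause is then just ``interior global minimum implies vanishing gradient.'' Your route is shorter, avoids the compactness and limit-point argument entirely (so visibility enters only through $D\neq\emptyset$), and handles the equality clause more transparently than the paper's contradiction argument, which strictly speaking only treats $c<c_0$. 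Incidentally, your argument also makes the constancy of $f$ on $D$ automatic (every point of $D$ is a global minimum), so the appeal to connectedness of $D$ is dispensable. What the paper's longer proof buys is reusable machinery: the flow $\psi_s$, equation \eqref{eq_psi_inter}, and the monotonicity estimate \eqref{eq_Ybeta} are exactly the tools deployed again in the proof of Theorem~\ref{thm_intersectionz}, so the authors' choice serves as groundwork for the main theorem rather than the most economical proof of this proposition.
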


\begin{proof}
Let $v_1\neq v_2\in T_p^1M$,
\begin{equation*}
D:=\{x\in M:\nabla b_{v_1}(x)+\nabla b_{v_2}(x)=0\},
\end{equation*}
and let $c_0$ be the value of $b_{v_1}+b_{v_2}$ on $D$. Note that $D\neq \emptyset$ since $(M,g)$ is a visibility manifold. Set
\begin{equation*}
Y:=\frac{1}{\|\nabla b_{v_1}+\nabla b_{v_2}\|^2}(\nabla b_{v_1}+\nabla b_{v_2})=\frac{1}{2+2\beta}(\nabla b_{v_1}+\nabla b_{v_2}),
\end{equation*}
on $M-D$,
where $\beta=g(\nabla b_{v_1},\nabla b_{v_2})$.
Then, since $g(Y,\nabla b_{v_1}+\nabla b_{v_2})=1$ and $g(Y,\nabla b_{v_1}-\nabla b_{v_2})=0$,
for all $s\in\mathbb{R}$ and $x\in M$ satisfying $(b_{v_1}(x)+b_{v_2}(x)-c_0)(b_{v_1}(x)+b_{v_2}(x)+s-c_0)>0$,
we have
\begin{equation}\label{eq_psi_inter}
b_{v_1}(\psi_s(x))=b_{v_1}(x)+\frac{s}{2}, \:\:\: b_{v_2}(\psi_s(x))=b_{v_2}(x)+\frac{s}{2},
\end{equation}
where $\psi_s$ is the flow of $Y$.

Now, suppose that $b_{v_1}(x)+b_{v_2}(x)=c<c_0$ for some $x\in M$. Let $U_i$ be the $(0,2)$-tensor field on $M$ defined by
\begin{equation*}
U_i(w_1,w_2):=g\left(\nabla_{w_1}\nabla b_{v_i},w_2\right)
\end{equation*}
for all $x'\in M$ and $w_1,w_2\in T_{x'}M$, where $i=1,2$. Since $b_{v_i}$ is convex, $U_i$ is positive semi-definite.
Consequently,
\begin{eqnarray}
Y[\beta]&=&\frac{(\nabla b_{v_1}+\nabla b_{v_2})\left[g\left(\nabla b_{v_1},\nabla b_{v_2}\right)\right]}{2+2\beta} \nonumber \\
&=&\frac{g\left(\nabla_{\nabla b_{v_1}}\nabla b_{v_2},\nabla b_{v_1}\right)+g\left(\nabla_{\nabla b_{v_2}}\nabla b_{v_1},\nabla b_{v_2}\right)}{2+2\beta} \nonumber \\
&=&\frac{U_2(\nabla b_{v_1},\nabla b_{v_1})+U_1(\nabla b_{v_2},\nabla b_{v_2})}{2+2\beta}\geq 0. \label{eq_Ybeta}
\end{eqnarray}
Hence, $s\mapsto \beta(\psi_s(x))$ is non-decreasing. By Lemma \ref{lem_visibility},
\begin{equation*}
(b_{v_1}+b_{v_2})^{-1}((-\infty,c_0])\cap (b_{v_1}-b_{v_2})^{-1}(t)
\end{equation*}
is compact and it contains $\psi_s(x)$ for all $s\in (0,c_0-c)$, where $t=b_{v_1}(x)-b_{v_2}(x)$.
Thus, {{there exists a sequence $s_i\in (0,c_0-c)$, $i=1,2,\ldots$, such that $\lim_{i\rightarrow\infty}s_i=c_0-c$ and the limit $x_0:=\lim_{i\rightarrow\infty}\psi_{s_i}(x)$ exists.}} Since
\begin{equation*}
(b_{v_1}+b_{v_2})(x_0)=(b_{v_1}+b_{v_2})(x)+c_0-c=c_0,
\end{equation*}
by equation \eqref{eq_intersection_in_D}, we have $x_0\in D$ so that $\beta(x_0)=-1$. Thus, since $s\mapsto \beta(\psi_s(x))$ is non-decreasing,
we have
\begin{equation*}
-1\leq\beta(x)\leq \beta(x_0)=-1,
\end{equation*}
which implies $x \in D$ and $c=c_0$. It is a contradiction to $c<c_0$.
\end{proof}

To prove Theorem \ref{thm_volz_bound}, we need the following theorem:

\begin{theorem}\label{thm_intersectionz}
Let $(M,g)$ be an asymptotically harmonic, visibility manifold.
Let $p\in M$, $v_1\neq v_2\in T_p^1M$, and $c_1,c_2\in\mathbb{R}$.
Let $S$ be the intersection $b_{v_1}^{-1}(c_1)\cap b_{v_2}^{-1}(c_2)$ of horospheres and
$\nabla b_{v_1}(x)+\nabla b_{v_2}(x)\neq 0$ for all points $x\in S$.
Then the following integrals are independent of $c_1-c_2$:
\begin{equation*}
\int_S \sqrt{\frac{1-g(\nabla b_{v_1},\nabla b_{v_2})}{1+g(\nabla b_{v_1},\nabla b_{v_2})}}d\mu', \:\:\:
\int_S \sqrt{\frac{1+g(\nabla b_{v_1},\nabla b_{v_2})}{1-g(\nabla b_{v_1},\nabla b_{v_2})}} d\mu',
\end{equation*}
where $d\mu'$ is the induced measure on the submanifold $S$ of $(M,g)$.
\end{theorem}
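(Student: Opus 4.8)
The plan is to obtain every admissible intersection with a given value of $c_1+c_2$ by flowing one of them along the vector field $X=\frac{1}{2-2\beta}(\nabla b_{v_1}-\nabla b_{v_2})$ of Lemma~\ref{lem_btbt}, where $\beta:=g(\nabla b_{v_1},\nabla b_{v_2})$, and to check that this flow carries both integrands and the induced measure on $S$ to themselves. Write $S=b_{v_1}^{-1}(c_1)\cap b_{v_2}^{-1}(c_2)$ and let $\phi_t$ be the flow of $X$. By \eqref{eq_phi_inter}, $\phi_t$ restricts to a diffeomorphism of $S$ onto $S_t:=b_{v_1}^{-1}(c_1+t/2)\cap b_{v_2}^{-1}(c_2-t/2)$, and, as $t$ varies, these are exactly the intersections with the same sum $c_1+c_2$; moreover, by \eqref{eq_intersection_in_D} and Proposition~\ref{prop_lower}, the hypothesis $\nabla b_{v_1}+\nabla b_{v_2}\neq0$ on $S$ amounts to $c_1+c_2>c_0$, which is preserved by $\phi_t$. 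So it suffices to prove that the two integrals are invariant under $\phi_t$. (A minor technical point: since $\phi_t$ preserves $b_{v_1}+b_{v_2}$, the set $\bigcup_{|s|\le t}\phi_s(S)$ lies in a fixed region $b_{v_1}^{-1}((-\infty,a])\cap b_{v_2}^{-1}((-\infty,a'])$, which is bounded by Lemma~\ref{lem_visibility}; hence $\phi_t$ is defined in a neighbourhood of $S$ for every $t$.)

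First, $\beta$ is constant along the flow lines of $\phi_t$. Indeed $(\phi_t)_*X=X$ for the flow of $X$, while Lemma~\ref{lem_btbt} gives $(\phi_t)_*\bigl((\nabla b_{v_1}-\nabla b_{v_2})(x)\bigr)=(\nabla b_{v_1}-\nabla b_{v_2})(\phi_t(x))$; since $X=\|\nabla b_{v_1}-\nabla b_{v_2}\|^{-2}(\nabla b_{v_1}-\nabla b_{v_2})$, comparing these two equalities forces $\|(\nabla b_{v_1}-\nabla b_{v_2})(x)\|=\|(\nabla b_{v_1}-\nabla b_{v_2})(\phi_t(x))\|$, i.e. $\beta(\phi_t(x))=\beta(x)$. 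Hence the integrands $\sqrt{(1-\beta)/(1+\beta)}$ and $\sqrt{(1+\beta)/(1-\beta)}$, being functions of $\beta$, are $\phi_t$-invariant. Second, $X$ is volume preserving on $M$: by asymptotic harmonicity $\operatorname{div}(\nabla b_{v_1}-\nabla b_{v_2})=\Delta b_{v_1}-\Delta b_{v_2}=0$, and since $\beta$ — hence the scalar $\frac{1}{2-2\beta}$ — is constant in the direction of $\nabla b_{v_1}-\nabla b_{v_2}$ by the previous step, $\operatorname{div}X=0$; so $\phi_t^*(d\mu)=d\mu$ by Proposition~\ref{prop_volprez}.

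It remains to show $\phi_t^*(d\mu'_{S_t})=d\mu'_S$. My preferred argument is linear-algebraic: the splitting $T_xM=T_xS\oplus P_x$ with $P_x:=\operatorname{span}\bigl(\nabla b_{v_1}(x),\nabla b_{v_2}(x)\bigr)$ is orthogonal; $(\phi_t)_*$ maps $T_xS$ onto $T_{\phi_t(x)}S_t$ (because $\phi_t(S)=S_t$) and $P_x$ onto $P_{\phi_t(x)}$ (by Lemma~\ref{lem_btbt}), and on $P_x$ it sends the ordered pair $\nabla b_{v_i}(x)$ to the ordered pair $\nabla b_{v_i}(\phi_t(x))$; these two pairs consist of unit vectors with the same mutual inner product $\beta$, so $(\phi_t)_*|_{P_x}$ is volume preserving, and since $|\det(\phi_t)_*|=1$ on $T_xM$ this forces $(\phi_t)_*|_{T_xS}$ to be volume preserving as well. (Alternatively, apply Proposition~\ref{prop_cha} twice — first to $b_{v_1}$, then inside each $b_{v_1}^{-1}(a)$ to the restriction of $b_{v_2}$, whose intrinsic gradient $\nabla b_{v_2}-\beta\nabla b_{v_1}$ has norm $\sqrt{1-\beta^2}$ — to get $\int_M f\,d\mu=\iint_{\mathbb R^2}\int_{S_{a,b}}\frac{f}{\sqrt{1-\beta^2}}\,d\mu'\,da\,db$; plug in $f$ and $f\circ\phi_t$, use that $\phi_t$ is measure preserving and $\beta\circ\phi_t=\beta$, and change variables $(a,b)\mapsto(a+\tfrac t2,b-\tfrac t2)$ to read off $\phi_t^*(d\mu'_{S_t})=d\mu'_S$.)

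Combining these facts, $\int_{S_t}\sqrt{(1-\beta)/(1+\beta)}\,d\mu'=\int_S\bigl(\sqrt{(1-\beta)/(1+\beta)}\circ\phi_t\bigr)\,\phi_t^*(d\mu'_{S_t})=\int_S\sqrt{(1-\beta)/(1+\beta)}\,d\mu'$, and likewise for $\sqrt{(1+\beta)/(1-\beta)}$; since $t$ is arbitrary, each integral depends only on $c_1+c_2$, i.e. is independent of $c_1-c_2$. I expect the genuine obstacle to be this last step — transporting the volume-preservation of $X$ on $M$ to volume-preservation of $\phi_t$ along the $(n-2)$-dimensional intersection — which is precisely where one needs the full pushforward statement of Lemma~\ref{lem_btbt} (that $(\phi_t)_*\nabla b_{v_i}=\nabla b_{v_i}$), not merely the weaker fact that $\phi_t$ maps $S_{a,b}$ onto $S_{a+t/2,\,b-t/2}$; the completeness remark above is a secondary point that the visibility hypothesis settles.
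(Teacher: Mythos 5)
Your argument stands or falls on the claim that $\beta=g(\nabla b_{v_1},\nabla b_{v_2})$ is constant along the flow lines of $\phi_t$, which you extract by reading Lemma \ref{lem_btbt} literally as the pushforward identity $(\phi_t)_*\nabla b_{v_i}(x)=\nabla b_{v_i}(\phi_t(x))$. That identity is not actually available: what the argument for Lemma \ref{lem_btbt} establishes is only equation \eqref{eq_phi_inter}, i.e.\ $b_{v_i}\circ\phi_t=b_{v_i}\pm t/2$, hence the pullback invariance $(\phi_t)^*db_{v_i}=db_{v_i}$; since $\phi_t$ is not an isometry, invariance of the differentials does not transfer to the gradients. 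Your deduction would give $X[\beta]\equiv 0$, hence $\operatorname{div}X=0$ by \eqref{eq_divX}, $(\phi_t)^*(d\mu)=d\mu$ and $(\phi_t)^*(d\mu')=d\mu'$ --- at which point not only this theorem but also Theorem \ref{thm_volz_bound} (with equality of volumes, and for any integrand that is a function of $\beta$) would be immediate. The paper does not have this: it states explicitly that the flow of $X$ is not volume preserving in general, computes $(\phi_t)^*(d\mu)=\frac{1-\beta}{1-\beta(\phi_t)}d\mu$ in \eqref{eq_Xz_dmu} and the weighted identity \eqref{eq_Xz_dmun-2}, and finds $\frac{\partial}{\partial t}W(s,t)=\int_{S(s,t)}\frac{2X[\beta]}{(1-\beta)\sqrt{1-\beta^2}}d\mu'$ with $X[\beta]=\frac{1}{2-2\beta}\left(U_2(\nabla b_{v_1},\nabla b_{v_1})-U_1(\nabla b_{v_2},\nabla b_{v_2})\right)$ as in \eqref{eq_X_diff_beta}. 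There is no reason, from the hypotheses alone, for this to vanish pointwise in an arbitrary asymptotically harmonic manifold (it does vanish in the rank one symmetric spaces, by the symmetry of the two shape operators, but that is a feature of those examples, not something the cited lemma delivers). So the central step of your proof --- constancy of $\beta$ along $\phi_t$, and with it $\operatorname{div}X=0$ and the $d\mu'$-preservation --- is exactly the main difficulty, and it is assumed rather than proved.

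For contrast, the paper's route proves only an integrated substitute. The invariance \eqref{eq_Xz_dmun-2} of $\sqrt{(1-\beta)/(1+\beta)}\,d\mu'$ under $\phi_t$ gives the first integral ($V$) at once, without knowing $\beta\circ\phi_t=\beta$. For the second integral ($W$) it brings in the second flow $\psi_s$ of $\frac{1}{2+2\beta}(\nabla b_{v_1}+\nabla b_{v_2})$, derives $\frac{\partial}{\partial s}W=\frac h2(V+W)$, hence $\frac{\partial}{\partial t}W(s+\varepsilon,t)=e^{hs/2}\frac{\partial}{\partial t}W(\varepsilon,t)$ as in \eqref{eq_Wz_ehst}, and then shows $\frac{\partial}{\partial t}W(\varepsilon,t)\to0$ as $\varepsilon\to0^+$ by bounding the Hessian terms with Lemma \ref{lem_bounded} and using that $\beta\to-1$ as the intersections collapse onto $D$ (Lemma \ref{lem_vol_S_dec_and_betaz}). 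In other words, the vanishing you assert pointwise is obtained there only in integrated form and only after the degeneration argument at $s\to0^+$. Your peripheral points (completeness of the flow via Lemma \ref{lem_visibility}, and the identification of the hypothesis $\nabla b_{v_1}+\nabla b_{v_2}\neq0$ on $S$ with $c_1+c_2\neq c_0$) are fine, but the proof as proposed has a genuine gap at its pivotal step.
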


\begin{proof}
We adopt the notations
\begin{equation*}
\beta,X,Y,\phi_t,\psi_s,D,c_0,U_1,U_2
\end{equation*}
in the proof of Lemma \ref{lem_btbt} and Proposition \ref{prop_lower}.

Note that $D$ is closed.
If there exists a neighborhood $V$ of $x\in D$ in $M$ such that $V\subseteq D$, then, by Theorem \ref{thm_esc_from_ref},
the level set of $b_v$ containing $x$ is totally geodesic, thus
$h=-\Delta b_v=0$, for all $v\in T^1M$.
Then, since a Busemann function is convex, by the Riccati equation for horospheres, $(M,g)$ is flat.
Hence, $(M,g)$ is not a visibility manifold which contradicts the assumption of Theorem \ref{thm_intersectionz}.

Suppose that $h\neq 0$. We note that $b_{v_1}+b_{v_2}\geq c_0$ by Proposition \ref{prop_lower}.
By equations \eqref{eq_phi_inter} and \eqref{eq_psi_inter},
the flows $\phi_t,\psi_s$ map one intersection $b_{v_1}^{-1}(c_1)\cap b_{v_2}^{-1}(c_2)$ of horospheres onto another intersection, where $c_1,c_2\in\mathbb{R}$ and $c_1+c_2> c_0$.
Let $E_1,\ldots,E_n$ be a positively oriented, orthonormal frame on $M-D$, and $\theta_1,\ldots,\theta_n$ be the dual 1-forms of the frame field such that $E_1,E_2$ spans
a subbundle containing $\nabla b_{v_1}$,$\nabla b_{v_2}$, and $d b_{v_1}\wedge d b_{v_2}=\sqrt{1-\beta^2}\:\theta_1\wedge\theta_2$.
Then $db_{v_1}\wedge db_{v_2}$ is invariant under the pullback of $\phi_t$ and $\psi_s$, so that, for all $s> 0$ and $t\in\mathbb{R}$,
\begin{eqnarray}
(\phi_t)^*(\theta_1\wedge\theta_2)&=&(\phi_t)^*\left(\frac{1}{\sqrt{1-\beta^2}}\:db_{v_1}\wedge db_{v_2}\right) \nonumber \\
&=&\frac{1}{\sqrt{1-\beta^2(\phi_t)}}\:(\phi_t)^*(db_{v_1}\wedge db_{v_2}) \nonumber \\
&=&\frac{1}{\sqrt{1-\beta^2(\phi_t)}}\:db_{v_1}\wedge db_{v_2}  \nonumber \\
&=&\sqrt{\frac{1-\beta^2}{1-\beta^2(\phi_t)}}\:\theta_1\wedge\theta_2,  \label{eq_Xz_dmu2} \\
(\psi_s)^*(\theta_1\wedge\theta_2)&=&\sqrt{\frac{1-\beta^2}{1-\beta^2(\psi_s)}}\:\theta_1\wedge\theta_2. \label{eq_Yz_dmu2}
\end{eqnarray}
on $M-D$. Since $(M,g)$ is asymptotically harmonic,
\begin{equation*}
\operatorname{div}\left(\nabla b_{v_1}-\nabla b_{v_2}\right)=0,
\:\:\: \operatorname{div}\left(\nabla b_{v_1}+\nabla b_{v_2}\right)=2h.
\end{equation*}
Thus, we obtain
\begin{eqnarray}
\operatorname{div}X&=&\operatorname{div}\left(\frac{1}{2-2\beta}\left(\nabla b_{v_1}-\nabla b_{v_2}\right)\right) \nonumber \\
&=&\left(\nabla b_{v_1}-\nabla b_{v_2}\right)\left[\frac{1}{2-2\beta}\right]+\frac{1}{2-2\beta}\operatorname{div}\left(\nabla b_{v_1}-\nabla b_{v_2}\right)\nonumber \\
&=&-\frac{\left(\nabla b_{v_1}-\nabla b_{v_2}\right)[2-2\beta]}{(2-2\beta)^2}=-\frac{X[2-2\beta]}{2-2\beta}\nonumber \\
&=&-X\left[\ln(2-2\beta)\right]\nonumber \\
&=&X\left[\ln\left(\frac{1}{1-\beta}\right)\right], \label{eq_divX}
\end{eqnarray}
and
\begin{eqnarray}
\operatorname{div}Y&=&\operatorname{div}\left(\frac{1}{2+2\beta}\left(\nabla b_{v_1}+\nabla b_{v_2}\right)\right)\nonumber \\
&=&\left(\nabla b_{v_1}+\nabla b_{v_2}\right)\left[\frac{1}{2+2\beta}\right]+\frac{1}{2+2\beta}\operatorname{div}\left(\nabla b_{v_1}+\nabla b_{v_2}\right)\nonumber \\
&=&-\frac{\left(\nabla b_{v_1}+\nabla b_{v_2}\right)[2+2\beta]}{(2+2\beta)^2}+\frac{2h}{2+2\beta}\nonumber \\
&=&-\frac{Y[2+2\beta]}{2+2\beta}+\frac{h}{1+\beta}\nonumber \\
&=&Y\left[\ln\left(\frac{1}{1+\beta}\right)\right]+\frac{h}{1+\beta}. \label{eq_divY}
\end{eqnarray}
Consequently, for the induced Riemannian measure $d\mu$ on $M-D$, the following equations hold:
\begin{eqnarray*}
\frac{\partial}{\partial t}(\phi_t)^*(d\mu)&=&(\phi_t)^*(L_X(d\mu))=(\phi_t)^*\left(\operatorname{div}Xd\mu\right)\\
&=&\left(\operatorname{div}X\circ \phi_t\right)(\phi_t)^*(d\mu),\\
\frac{\partial}{\partial s}(\psi_s)^*(d\mu)&=&\left(\operatorname{div}Y\circ\psi_s\right)(\psi_s)^*(d\mu).
\end{eqnarray*}
Set $(\phi_t)^*(d\mu)=f_t \: d\mu$ where $f_t\in C^{\infty}(M-D)$. Then
\begin{gather*}
\frac{\partial}{\partial t}f_t \: d\mu = \left(\operatorname{div}X\circ \phi_t\right) f_t\: d\mu,\\
\frac{\partial}{\partial t}\ln f_t=\operatorname{div}X\circ \phi_t.
\end{gather*}
Note that $X[f](\phi_t)=\frac{\partial}{\partial t}f(\phi_t)$ for all $f\in C^{\infty}(M-D)$. Thus, since $f_0=1$,
{{from equation \eqref{eq_divX}, we get}}
\begin{equation*}
f_t=\frac{1-\beta}{1-\beta(\phi_t)}.
\end{equation*}
Hence,
\begin{equation}\label{eq_Xz_dmu}
(\phi_t)^*(d\mu)=\frac{1-\beta}{1-\beta(\phi_t)}d\mu.
\end{equation}
Similarly, when $(\psi_s)^*(d\mu)=g_s \: d\mu$ for some $g_s\in C^{\infty}(M-D)$, {{by equation \eqref{eq_divY},}}
\begin{equation*}
\ln g_s=\ln\left(\frac{1+\beta}{1+\beta(\psi_s)}\right)+\int_0^s\frac{h}{1+\beta(\psi_k)}dk,
\end{equation*}
so
\begin{equation}\label{eq_Yz_dmu}
(\psi_s)^*(d\mu)=\exp\left(\int_0^s\frac{h}{1+\beta(\psi_k)}dk\right)\frac{1+\beta}{1+\beta(\psi_s)}d\mu.
\end{equation}
Set $d\mu':=\theta_3\wedge\cdots\wedge\theta_n$. Then, by {{equations}} \eqref{eq_Xz_dmu2}, \eqref{eq_Yz_dmu2}, \eqref{eq_Xz_dmu}, and \eqref{eq_Yz_dmu}, we have
\begin{eqnarray*}
(\phi_t)^*(d\mu')&=&\sqrt{\frac{(1-\beta)(1+\beta(\phi_t))}{(1+\beta)(1-\beta(\phi_t))}}d\mu',\\
(\psi_s)^*(d\mu')&=&\exp\left(\int_0^s\frac{h}{1+\beta(\psi_k)}dk\right)
\sqrt{\frac{(1+\beta)(1-\beta(\psi_s))}{(1-\beta)(1+\beta(\psi_s))}}d\mu',
\end{eqnarray*}
or, equivalently,
\begin{eqnarray}
\sqrt{\frac{1-\beta(\phi_t)}{1+\beta(\phi_t)}}\:(\phi_t)^*(d\mu')&=&\sqrt{\frac{1-\beta}{1+\beta}}d\mu',  \label{eq_Xz_dmun-2}\\
\sqrt{\frac{1+\beta(\psi_s)}{1-\beta(\psi_s)}}\:(\psi_s)^*(d\mu')
&=&\exp\left(\int_0^s\frac{h}{1+\beta(\psi_k)}dk\right)
\sqrt{\frac{1+\beta}{1-\beta}}d\mu'. \label{eq_Yz_dmun-2}
\end{eqnarray}

Now, let
\begin{equation*}
S(s,t):=b_{v_1}^{-1}\left(\frac{s+c_0+t}{2}\right)\cap b_{v_2}^{-1}\left(\frac{s+c_0-t}{2}\right),
\end{equation*}
for all $s\geq 0$ and $t\in\mathbb{R}$. {{Note that
\begin{equation*}
S(c_1+c_2-c_0,c_1-c_2)=b_{v_1}^{-1}(c_1)\cap b_{v_2}^{-1}(c_2),
\end{equation*}
for all $c_1,c_2\in\mathbb{R}$ such that $c_1+c_2\geq c_0$.}} Set
\begin{eqnarray*}
V(s,t)&:=&\int_{S(s,t)}\sqrt{\frac{1-\beta}{1+\beta}}d\mu',\\
W(s,t)&:=&\int_{S(s,t)}\sqrt{\frac{1+\beta}{1-\beta}}d\mu',
\end{eqnarray*}
for all $s>0$ and $t\in\mathbb{R}$.
Then $\phi_t(S(s,t_0))=S(s,t_0+t)$ and $\psi_s(S(\varepsilon,t))=S(s+\varepsilon,t)$, for all $s,\varepsilon>0$ and $t_0,t\in\mathbb{R}$.
By {{equation}} \eqref{eq_Xz_dmun-2}, {{
\begin{eqnarray}
V(s,t)&=&\int_{\phi_t(S(s,0))}\sqrt{\frac{1-\beta}{1+\beta}}d\mu' \nonumber\\
&=&\int_{S(s,0)}\sqrt{\frac{1-\beta(\phi_t)}{1+\beta(\phi_t)}}\:(\phi_t)^*(d\mu') \nonumber \\
&=&\int_{S(s,0)}\sqrt{\frac{1-\beta}{1+\beta}}d\mu' \nonumber \\
&=&V(s,0), \label{eq_Vz_t}
\end{eqnarray} }}
for all $s>0$ and $t\in\mathbb{R}$.
{{For all $s,\varepsilon >0$ and $t\in\mathbb{R}$, from equation \eqref{eq_Yz_dmun-2}, we obtain }}

\begin{eqnarray*}
W(s+\varepsilon,t)&=&\int_{\psi_s(S(\varepsilon,t))}\sqrt{\frac{1+\beta}{1-\beta}}d\mu'\\
&=&\int_{S(\varepsilon,t)}\sqrt{\frac{1+\beta(\psi_s)}{1-\beta(\psi_s)}}\:(\psi_s)^*(d\mu')\\
&=&\int_{S(\varepsilon,t)}\exp\left(\int_0^s\frac{h}{1+\beta(\psi_k)}dk\right)\sqrt{\frac{1+\beta}{1-\beta}}d\mu'.
\end{eqnarray*}
By differentiating it for $s$, {{we get 
\begin{eqnarray*}
\frac{\partial}{\partial s}W(s+\varepsilon,t)
&=&\int_{S(\varepsilon,t)}\frac{h}{1+\beta(\psi_s)}\exp\left(\int_0^s\frac{h}{1+\beta(\psi_k)}dk\right)\sqrt{\frac{1+\beta}{1-\beta}}d\mu'\\
&=&\int_{S(\varepsilon,t)}\frac{h}{1+\beta(\psi_s)}\sqrt{\frac{1+\beta(\psi_s)}{1-\beta(\psi_s)}}\:(\psi_s)^*(d\mu')\\
&=&\int_{S(s+\varepsilon,t)}\frac{h}{\sqrt{1-\beta^2}}d\mu'.
\end{eqnarray*} }}
By equation \eqref{eq_Vz_t}, we have
\begin{eqnarray}
\frac{\partial}{\partial t}V(s,t)&=& 0, \label{eq_Vz_diff_t}\\
\frac{\partial}{\partial s}W(s,t)&=& \int_{S(s,t)}\frac{h}{\sqrt{1-\beta^2}}d\mu' \nonumber \\
&=&\frac{h}{2}W(s,t)+\frac{h}{2}V(s,t). \label{eq_Wz_diff_s}
\end{eqnarray}
By differentiating {{equation}} \eqref{eq_Wz_diff_s} for $t$, from {{equation}} \eqref{eq_Vz_diff_t}, the following equation holds:
\begin{equation*}
\frac{\partial^2}{\partial s\partial t}W(s,t)=\frac{h}{2}\frac{\partial}{\partial t}W(s,t),
\end{equation*}
{{which implies,}} for all $s,\varepsilon >0$ and $t\in\mathbb{R}$,
\begin{equation}\label{eq_Wz_ehst}
\frac{\partial}{\partial t}W(s+\varepsilon,t)=e^{hs/2}\frac{\partial}{\partial t}W(\varepsilon,t).
\end{equation}
By {{equation}} \eqref{eq_Xz_dmun-2},
\begin{eqnarray*}
W(s,t)
&=& \int_{\phi_t(S(s,0))}\sqrt{\frac{1+\beta}{1-\beta}}d\mu'\\
&=& \int_{S(s,0)}\sqrt{\frac{1+\beta(\phi_t)}{1-\beta(\phi_t)}}\:(\phi_t)^*(d\mu')\\
&=& \int_{S(s,0)}\frac{1+\beta(\phi_t)}{1-\beta(\phi_t)}\sqrt{\frac{1-\beta(\phi_t)}{1+\beta(\phi_t)}}\:(\phi_t)^*(d\mu') \\
&=& \int_{S(s,0)}\frac{1+\beta(\phi_t)}{1-\beta(\phi_t)}\sqrt{\frac{1-\beta}{1+\beta}}d\mu'\\
&=& \int_{S(s,0)}\left(-1+\frac{2}{1-\beta(\phi_t)}\right)\sqrt{\frac{1-\beta}{1+\beta}}d\mu'.
\end{eqnarray*}
Thus we obtain
\begin{eqnarray*}
\frac{\partial}{\partial t}W(s,t)
&=& \int_{S(s,0)}\frac{2 X[\beta](\phi_t)}{(1-\beta(\phi_t))^2}\sqrt{\frac{1-\beta}{1+\beta}}d\mu'\\
&=& \int_{S(s,0)}\frac{2 X[\beta](\phi_t)}{(1-\beta(\phi_t))^2}\sqrt{\frac{1-\beta(\phi_t)}{1+\beta(\phi_t)}}\:(\phi_t)^*(d\mu')\\
&=& \int_{S(s,t)}\frac{2X[\beta]}{(1-\beta)\sqrt{1-\beta^2}}d\mu'.
\end{eqnarray*}
Now, we get, from equation \eqref{eq_Xz_def},
\begin{eqnarray}
X[\beta]&=&\frac{1}{2-2\beta}(\nabla b_{v_1}-\nabla b_{v_2})\left[g(\nabla b_{v_1},\nabla b_{v_2})\right] \nonumber \\
&=&\frac{1}{2-2\beta}\left(U_2(\nabla b_{v_1},\nabla b_{v_1})-U_1(\nabla b_{v_2},\nabla b_{v_2})\right).\label{eq_X_diff_beta}
\end{eqnarray}
Consequently, we have
\begin{eqnarray*}
\frac{\partial}{\partial t}W(s,t)
&=&\int_{S(s,t)}\frac{\sqrt{1-\beta^2}}{(1-\beta)^2}
\frac{U_2(\nabla b_{v_1},\nabla b_{v_1})-U_1(\nabla b_{v_2},\nabla b_{v_2})}
{1-g(\nabla b_{v_1},\nabla b_{v_2})^2}d\mu'\\
&=&\int_{S(s,t)}\frac{\sqrt{1-\beta^2}}{(1-\beta)^2}\frac{U_2(\nabla b_{v_1}-\beta\nabla b_{v_2},\nabla b_{v_1}-\beta\nabla b_{v_2})}{\|\nabla b_{v_1}-\beta\nabla b_{v_2}\|^2}d\mu' \\
&&-\int_{S(s,t)}\frac{\sqrt{1-\beta^2}}{(1-\beta)^2}\frac{U_1(\nabla b_{v_2}-\beta\nabla b_{v_1},\nabla b_{v_2}-\beta\nabla b_{v_1})}{\|\nabla b_{v_2}-\beta\nabla b_{v_1}\|^2}d\mu'.
\end{eqnarray*}
Now, use the following lemma:
\begin{lemma}\label{lem_bounded}
Let $(M,g)$ be an asymptotically harmonic manifold. Then, for all $v,w\in T^1M$,
\begin{equation}\label{eq_max_eig}
g(\nabla_w\nabla b_v,w)\leq h,
\end{equation}
where $h=-\Delta b_v$.
\end{lemma}

\begin{proof}
Let $U$ be the $(0,2)$-tensor field defined by $U(w_1,w_2):=g(\nabla_{w_1}\nabla b_v,w_2)$ for all $x\in M$ and $w_1,w_2\in T_xM$.
Then $U$ is symmetric, positive semi-definite, and $\operatorname{tr}U=h$.
Thus, every eigenvalue of $U$ is real, non-negative, and less than or equal to $h$. In particular, equation \eqref{eq_max_eig} holds.
\end{proof}

By Lemma \ref{lem_bounded},
\begin{equation}\label{eq_bounded_hess}
\begin{split}
&0\leq \frac{U_1(\nabla b_{v_2},\nabla b_{v_2})}{1-\beta^2}=\frac{U_1(\nabla b_{v_2}-\beta\nabla b_{v_1},\nabla b_{v_2}-\beta\nabla b_{v_1})}{\|\nabla b_{v_2}-\beta\nabla b_{v_1}\|^2}\leq h,  \\
&0\leq \frac{U_2(\nabla b_{v_1},\nabla b_{v_1})}{1-\beta^2}=\frac{U_2(\nabla b_{v_1}-\beta\nabla b_{v_2},\nabla b_{v_1}-\beta\nabla b_{v_2})}{\|\nabla b_{v_1}-\beta\nabla b_{v_2}\|^2} \leq h.
\end{split}
\end{equation}
Hence, {{by equations \eqref{eq_X_diff_beta} and \eqref{eq_bounded_hess},}} $\frac{X[\beta]}{1+\beta}$ is bounded.
By Lemma \ref{lem_visibility}, {{$\bigcup_{\varepsilon\in [0,s]}S(\varepsilon,t)$}} is compact for all $s >0$.
Fix $s>0$. Then $\beta$ has a maximum value on the compact set $\bigcup_{\varepsilon\in [0,s]}S(\varepsilon,t)$.
In particular, there exists a constant $C>0$ such that, for all $\varepsilon\in (0,s)$,
\begin{equation}\label{eq_parWz_bound}
\left|\frac{\partial}{\partial t}W(\varepsilon,t)\right|\leq C \int_{S(\varepsilon,t)}\sqrt{1+\beta}\:d\mu'.
\end{equation}


{{Now, we use the following lemma:
\begin{lemma}\label{lem_vol_S_dec_and_betaz}
The $(n-2)$-dimensional volume $\operatorname{vol}_{n-2}S(s,t)$ of $S(s,t)$ is non-decreasing for $s>0$ where $t\in\mathbb{R}$ is fixed, and,
for all $s>0$ and $x\in S(s,t)$,
\begin{equation*}
\beta(x)\leq -2e^{-hs}+1.
\end{equation*}
\end{lemma}
}}

\begin{proof}
For all $s,\varepsilon>0$ and $t\in\mathbb{R}$,
\begin{equation*}
\begin{split}
&\operatorname{vol}_{n-2}S(s+\varepsilon,t)= \int_{\psi_s(S(\varepsilon,t))}d\mu'=\int_{S(\varepsilon,t)}(\psi_s)^*(d\mu')\\
&=\int_{S(\varepsilon,t)}\exp\left(\int_0^s\frac{h}{1+\beta(\psi_k)}dk\right)\sqrt{\frac{(1+\beta)(1-\beta(\psi_s))}{(1-\beta)(1+\beta(\psi_s))}}\:d\mu'.
\end{split}
\end{equation*}
Thus, we obtain
\begin{eqnarray*}
\frac{\partial}{\partial s}\operatorname{vol}_{n-2}S(s+\varepsilon,t)
&=& \int_{S(\varepsilon,t)}\left(\frac{h}{1+\beta(\psi_s)}-\frac{Y[\beta](\psi_s)}{1-\beta^2(\psi_s)}\right)(\psi_s)^*(d\mu')\\
&=& \int_{S(s+\varepsilon,t)}\left(\frac{h}{1+\beta}-\frac{Y[\beta]}{1-\beta^2}\right)d\mu' \\
&=& \int_{S(s+\varepsilon,t)}\frac{1}{1+\beta}\left(h-\frac{Y[\beta]}{1-\beta}\right)d\mu'
\end{eqnarray*}
Hence, we have
\begin{equation}\label{eq_S_diff_for_s}
\frac{\partial}{\partial s}\operatorname{vol}_{n-2}S(s,t)
= \int_{S(s,t)}\frac{1}{1+\beta}\left(h-\frac{Y[\beta]}{1-\beta}\right)d\mu'.
\end{equation}
By equations \eqref{eq_Ybeta} and \eqref{eq_bounded_hess},
\begin{equation}\label{eq_Ybetaz_upper}
Y\left[\ln\left(\frac{1}{1-\beta}\right)\right]=\frac{Y[\beta]}{1-\beta}\leq h,
\end{equation}
{{and we obtain }}
\begin{equation*}
\ln\left(\frac{1-\beta(x)}{1-\beta(\psi_s(x))}\right)\leq hs
\end{equation*}
and
\begin{equation*}
\beta(\psi_s(x))\leq e^{-hs}\beta(x) + 1 - e^{-hs},
\end{equation*}
for all $s>0$ and $x\in M-D$. Since, $\lim_{k\rightarrow s-}\beta(\psi_{-k}(x))=-1$ for all $x\in S(s,t)$, by considering $\beta(x)=\beta(\psi_s(\psi_{-s}(x)))$, we obtain, for all $s>0$ and $x\in S(s,t)$,
\begin{equation}\label{eq_betaz_upper}
\beta(x)\leq -2 e^{-hs}+1.
\end{equation}
Thus, by {{equations}} \eqref{eq_S_diff_for_s}, \eqref{eq_Ybetaz_upper}, and \eqref{eq_betaz_upper},
\begin{equation}
\frac{\partial}{\partial s}\operatorname{vol}_{n-2}S(s,t)
\geq\frac{1}{2(1-e^{-hs})}\left(h-h\right)\operatorname{vol}_{n-2}S(s,t)= 0.
\end{equation}
Thus, $\operatorname{vol}_{n-2}S(s,t)$ is non-decreasing for $s>0$.
\end{proof}

By equation \eqref{eq_parWz_bound} and Lemma \ref{lem_vol_S_dec_and_betaz}, for small $\varepsilon>0$,
$\operatorname{vol}_{n-2}S(\varepsilon,t)$ is bounded, and for some constant $C>0$
\begin{equation*}
\left|\frac{\partial}{\partial t}W(\varepsilon,t)\right|\leq C\sqrt{2-2e^{-h\varepsilon}}.
\end{equation*}
Then we obtain
\begin{equation}\label{eq_Wz_ehst_limit}
\lim_{\varepsilon\rightarrow 0+}\frac{\partial}{\partial t}W(\varepsilon,t)=0,
\end{equation}
and, by equations \eqref{eq_Wz_ehst} and \eqref{eq_Wz_ehst_limit}, we have
\begin{equation}\label{eq_Wz_diff_t}
\frac{\partial}{\partial t}W(s,t)=0
\end{equation}
By equations \eqref{eq_Vz_diff_t} and \eqref{eq_Wz_diff_t}, Theorem \ref{thm_intersectionz} is proved.


\end{proof}







\begin{proof}[Proof of Theorem \ref{thm_volz_bound}]
Since $\beta = -1$ on $D$, the notations $X$, $\phi_t$ can be extended to $M$, and, by equation \eqref{eq_Xz_dmu}, we have
\begin{equation*}
\operatorname{vol}_{n-2}S(0,t)= \int_{S(0,0)}(\phi_t)^*(d\mu')=\int_{S(0,0)}d\mu'=\operatorname{vol}_{n-2}S(0,0).
\end{equation*}  
Now, suppose that $c_1+c_2\neq c_0$.
The $(n-2)$-dimensional volume of the intersection $S=b_{v_1}^{-1}(c_1)\cap b_{v_2}^{-1}(c_2)$ satisfies
\begin{equation*}
\operatorname{vol}_{n-2}S=\int_S d\mu' \leq \int_S \frac{1}{\sqrt{1-\beta^2}}d\mu'=\frac{1}{2}(V(s,t)+W(s,t)),
\end{equation*}
where $s=c_1+c_2-c_0$, $t=c_1-c_2$, and $V(s,t)+W(s,t)$ is independent of $t$ by Theorem \ref{thm_intersectionz}.
\end{proof}

Now, we give an example which supports the main theorem.

\begin{example}
Consider the Poincar\'e upper-half plane model $\{(x,y,z)\in\mathbb{R}^3:z>0\}$ with the metric: 
\begin{equation*}
g=\frac{dx^2+dy^2+dz^2}{z^2}.
\end{equation*}
The distance between two points $(x_1,y_1,z_1)$, $(x_2,y_2,z_2)$ is
\begin{equation*}
d((x_1,y_1,z_1),(x_2,y_2,z_2))=2\operatorname{arcsinh}\left(\frac{\sqrt{(x_2-x_1)^2+(y_2-y_1)^2+(z_2-z_1)^2}}{2\sqrt{z_1z_2}}\right),
\end{equation*}
We consider the volume of the intersections of horospheres for the case: $v_1=\left.\frac{\partial}{\partial x}\right|_{(0,0,1)}$ and $v_2=-\left.\frac{\partial}{\partial x}\right|_{(0,0,1)}$.
The geodesic $\gamma_{v_1}$ is the unit circle centered at $(0,0,0)$ and
the equation of the geodesic sphere centered at $(\cos r,0,\sin r)$ containing $(0,0,2)$ is
\begin{equation*}
\frac{(x-\cos r)^2+y^2+(z-\sin r)^2}{4\sin r \: z}=\frac{5-4\sin r}{8\sin r},
\end{equation*}
or equivalently,
\begin{equation*}
(x-\cos r)^2+y^2+\left(z-\frac{5}{4}\right)^2=\frac{9}{16}.
\end{equation*}
Every point $(x,y,z)$ of the intersection of the geodesic spheres centered at $(\cos r,0,\sin r)$ and $(-\cos r,0,\sin r)$ containing $(0,0,2)$ satisfies
\begin{equation*}
x=0, \:\: y^2+\left(z-\frac{5}{4}\right)^2=\frac{9}{16}.
\end{equation*}
It is also the equation of the intersection of horospheres.
Write $y=\frac{3}{4}\cos t$ and $z=\frac{3}{4}\sin t+\frac{5}{4}$. Then the volume of the intersection of horospheres is
\begin{equation*}
\int_0^{2\pi} \frac{3}{3\sin t + 5}dt < 3\pi.
\end{equation*}
\end{example}

\begin{remark}
Let $(M,g)$ be an asymptotically harmonic, visibility manifold.
Let $c_1,c_2\in\mathbb{R}$ such that $c_1+c_2>c_0$ and $r>0$. The set $b_{v_1}^{-1}([c_1,c_1+r])\cap b_{v_2}^{-1}([c_2,c_2+r])$ is
 the union of countably many sets of the form
\begin{equation*}
S=\{x\in M: a_1+c_0\leq b_{v_1}+b_{v_2}\leq a_2+c_0, \:\: a_3\leq b_{v_1}-b_{v_2}\leq a_4\},
\end{equation*}
where $a_1,a_2\geq 0$ and $a_3,a_4\in\mathbb{R}$.
It can be obtained by taking the middle points of each sides of the square $[c_1,c_1+r]\times [c_2,c_2+r]$ repeatedly.
The $n$-dimensional volume of a set of the form equals
\begin{equation*}
\begin{split}
&\int_S d\mu= \int_S \theta_1\wedge\theta_2\wedge d\mu'  \\
&=\int_S db_{v_1}\wedge db_{v_2}\wedge \left(\frac{1}{\sqrt{1-\beta^2}}d\mu'\right)\\
&=\frac{1}{2}\int_{a_1}^{a_2}\int_{a_3}^{a_4}\int_{S(s,t)}\frac{1}{\sqrt{1-\beta^2}}d\mu'\:dt\:ds.
\end{split}
\end{equation*}
By Theorem \ref{thm_intersectionz},
\begin{equation*}
\int_S d\mu = \frac{a_4-a_3}{2}\int_{a_1}^{a_2}\int_{S(s,0)}\frac{1}{\sqrt{1-\beta^2}}d\mu'\:ds.
\end{equation*}
Therefore, the $n$-dimensional volume of $b_{v_1}^{-1}([c_1,c_1+r])\cap b_{v_2}^{-1}([c_2,c_2+r])$ is independent of $c_1-c_2$.
\end{remark}


\section*{{Acknowledgements}}
This work was supported by the National Research Foundation of Korea (NRF) grant funded by the Korea government (MSIT) (NRF-2019R1A2C1083957).


\begin{thebibliography}{KNP2}

\bibitem{BGS} W. Ballmann, M. Gromov, and V. Schroeder, Manifolds of nonpositive curvature, Progress in Mathematics, 61, Birkhäuser Boston, Inc., Boston, MA, 1985, vi+263 pp.

\bibitem{BKP} K. Biswas, G. Knieper, and N. Peyerimhoff, {\it The Fourier transform on harmonic manifolds of purely exponential volume growth}, J. Geom. Anal. {\bf 31} (2021), no. 1, 126-163.

\bibitem{BrH} M. R. Bridson and A. Haefliger, Metric spaces of non-positive curvature, Grundlehren der Mathematischen Wissenschaften, 319, Springer-Verlag, Berlin, 1999, xxii+643 pp.


\bibitem{CsH} B. Csik\'os and M. Horv\'ath, {\it A characterization of harmonic spaces}, J. Differential Geom. {\bf 90} (2012), no 3, 383-389.

\bibitem{CsH2} B. Csik\'os and M. Horv\'ath, {\it Harmonic manifolds and tubes}, J. Geom. Anal. {\bf 28} (2018), no. 4, 3458-3476.

\bibitem{Cha} I. Chavel, Riemannian geometry, Second edition, Cambridge Studies in Advanced Mathematics, 98, Cambridge University Press, Cambridge, 2006, xvi+471 pp.


\bibitem{Esc} J.-H. Eschenburg, {\it Horospheres and the stable part of the geodesic flow}, Math. Z. {\bf 153} (1977), no. 3, 237–251.

\bibitem{HaM} G. Haller and I. Mez\'c, {\it Reduction of three-dimensional, volume-preserving flows with symmetry}, Nonlinearity {\bf 11} (1998), no. 2, 319-339.

\bibitem{Heb} J. Heber, {\it On harmonic and asymptotically harmonic homogeneous spaces}, Geom. Funct. Anal. {\bf 16} (2006), no. 4, 869-890.

\bibitem{HI} E. Heintze and H.C. Im Hof, {\it Geometry of horospheres}, Jour. Diff. Geom., {\bf 12} (1977), 481-491.

\bibitem{ItS} M. Itoh and H. Satoh, {\it Information geometry of Busemann-barycenter for probability measures}, Internat. J. Math. {\bf 26} (2015), no. 6, 1541007, 26 pp. 

\bibitem{KnP} G. Knieper and N. Peyerimhoff, {\it Harmonic functions on rank one asymptotically harmonic manifolds}, J. Geom. Anal. {\bf 26} (2016), no. 2, 750-781. 


\bibitem{Rou} F. Rouvi\'ere, {\it Radon transform on a harmonic manifold}, J. Geom. Anal. {\bf 31} (2021), no. 6, 6365-6385.

\bibitem{Sza} Z. I. Szab\`o, {\it The Lichnerowicz conjecture on harmonic manifolds}, J. Differential Geom. {\bf 31} (1990), 1-28.

\end{thebibliography}
\end{document}